\newcommand{\N}{\mathbb{N}}
\newcommand{\R}{\mathbb{R}}
\newcommand{\ddt}{\partial_t}
\newcommand{\bU}{\mathbf{U}}
\newcommand{\bV}{\mathbf{V}}
\newcommand{\bQ}{\mathbf{Q}}
\newcommand{\bF}{\mathbf{F}}
\newcommand{\bT}{\mathbf{T}}
\newcommand{\bA}{\mathbf{A}}
\newcommand{\bw}{\mathbf{w}}
\newcommand{\bI}{\mathbf{I}}
\newcommand{\bSigma}{\mathbf{\Sigma}}
\newcommand{\bLambda}{\mathbf{\Lambda}}
\newcommand{\bzero}{\mathbf{0}}
\newcommand{\bx}{\mathbf{x}}
\newcommand{\bn}{\mathbf{n}}
\newcommand{\bt}{\mathbf{t}}
\newcommand{\bj}{\mathbf{j}}
\newcommand{\bv}{\mathbf{v}}
\newcommand{\br}{\mathbf{r}}
\newcommand{\pdiff}[2]{\frac{\partial\, #1}{\partial\, #2}}
\newcommand{\rs}{\mathcal{RS}}
\newcommand{\CFL}{\text{CFL}}
\newtheorem{definition}{Definition}[section]
\newtheorem{theorem}{Theorem}[section]
\newtheorem{lemma}[theorem]{Lemma}
\newtheorem{proposition}[theorem]{Proposition}
\newtheorem{remark}[theorem]{Remark}
\newtheorem{algorithm}[theorem]{Algorithm}
\providecommand{\msc}[1]{\textit{2020 MSC:} #1}
\begin{document}
\date{
  \small
  $^1$Institute of Geometry and Applied Mathematics,\\ RWTH Aachen University, Im Süsterfeld 2,\\ 52072 Aachen, Germany\\
  \smallskip
  $^2$Chair of Geometry and Analysis,\\ RWTH Aachen University, Kreuzherrenstraße 2,\\ 52062 Aachen, Germany\\
  \bigskip
   January 2026
  }
\author{Niklas Kolbe$^{1,}$\footnote{Corresponding author. Address: Institute of Geometry and Applied Mathematics, RWTH Aachen University, Templergraben 55, 52062 Aachen, Germany. E-mail: \tt{kolbe@igpm.rwth-aachen.de}} \and Siegfried Müller$^1$ \and Aleksey Sikstel$^2$}

\title{Discontinuous Galerkin schemes for multi-dimensional coupled hyperbolic systems}
\maketitle

\begin{abstract}
 A novel class of Runge-Kutta discontinuous Galerkin schemes for coupled systems of conservation laws in multiple space dimensions that are separated by a fixed sharp interface is introduced. The schemes are derived from a relaxation approach and a local projection and do not require expensive solutions of nonlinear half-Riemann problems. The underlying Jin-Xin relaxation involves a problem specific modification of the coupling condition at the interface, for which a simple construction algorithm is presented. The schemes are endowed with higher order time discretization by means of strong stability preserving Runge-Kutta methods. These are derived from an asymptotic preserving implicit-explicit treatment of the coupled relaxation system taken to the discrete relaxation limit. In a case study the application to a multi-dimensional fluid-structure coupling problem employing the compressible Euler equations and a linear elastic model is discussed.
  
  \bigskip
  \noindent \msc{35L60, 35R02, 65M60, 65N55, 74F10}
  \end{abstract}
\section{Introduction}
Many phenomena in the natural sciences and engineering are governed by interacting physical processes that are most naturally modeled in terms of coupled systems of partial differential equations (PDEs). In such settings, different PDE models apply in different spatial regions and their mutual influence is expressed through coupling conditions at the interfaces. Prominent examples include the design of composite materials, where distinct thermo-mechanical behaviors coexist in a heterogeneous structure~\cite{ciarletta1993noncl, ghasemi2018}, the study of multiphase flows, in which immiscible fluids with different properties are separated by evolving interfaces~\cite{abgrall2001comput, ndanou2015multi, tryggvason2001} and fluid–structure interaction problems, where fluid flow models are coupled to deformable structural models along a shared boundary~\cite{takizawa2011multis, farhat2012fiver}. The study of these and related applications require efficient and reliable numerical methods capable of solving coupled PDE systems.

The case of coupled nonlinear hyperbolic systems is particularly relevant for the applications mentioned above and poses substantial mathematical and numerical challenges even for static interfaces. Iterative approaches have been frequently employed to discretize such problems, typically by alternating between solvers adressing the different subdomains and enforcing the coupling conditions in an outer iteration~\cite{hou2012numerreview}. While these schemes are common in fluid–structure interaction with moving boundaries, the repeated use of the solvers and stringent stability requirements make them computationally expensive. In addition, the convergence behaviour of iterative schemes is yet unknown and difficult to analyze. An alternative is to adopt an interface treatment more closely aligned with Godunov’s method and front-tracking techniques~\cite{bressan2000hyper}, on which most modern schemes for hyperbolic systems—such as finite volume methods~\cite{harten1983upstrdiffergodun}, MUSCL and WENO reconstructions~\cite{leer1979towarultimconserdifferschem, shu1998essen} and discontinuous Galerkin (DG) methods~\cite{cockburn1998rungekuttagaler}—are based. In this framework, two coupled half-Riemann problems are posed at the interface; their solutions define intermediate coupling states that encode the coupling conditions and yield interface fluxes tailored to the distinct systems on either side, see e.g.~\cite{godlewski2004numerintercoupl, godlewski2005numericinter, chalons2008, herty2019couplcompreulerequat}. This construction relies on the Lax curves of the coupled systems~\cite{lax1973hyper}, which may not be available in closed form~\cite{hantke2018analysimulnew} and, even when they are, often lead to highly nonlinear algebraic systems that can only be solved approximately. To mitigate this difficulty, several methods have been recently proposed that simplify the process by either resorting to linearization around suitable reference states or by employing approximate Riemann solvers~\cite{toro2009rieman}, thereby reducing the complexity while retaining the main structural features of the hyperbolic coupling. In~\cite{banda2015} a linearization technique for the coupling conditions in case of gas-flow has been proposed. Another linearization approach specific to subsonic flows based on an implicit formulation has been developed in~\cite{kolb2010}. 

Closely related concepts arise in the study of hyperbolic flow dynamics on networks, where, at each node, several incoming and outgoing edges meet and more than two half-Riemann problems are coupled simultaneously, see, for example,~\cite{bressan2014flowsnetwor} for representative results in this direction. The simplest case concerns conservation laws with a single spatial discontinuity in the flux function, for which, in the nonlinear one-dimensional setting, a classification theory of admissible interface conditions and corresponding solution concepts has been developed in~\cite{adimurthi2005optim, andreianov2011}.

A large part of the literature on coupled hyperbolic problems relies on application-specific treatments, is limited to one spatial dimension or does not easily allow for higher order approximations, e.g.,~\cite{colombo2010coupleuler, herty2003model}. Extending one-dimensional approaches to genuinely multi-dimensional domains introduces additional geometric complications, since interfaces are no longer isolated points but $(d-1)$-dimensional surfaces whose orientation and location relative to the computational mesh must be taken into account. This can be addressed either by solving fully multidimensional coupled Riemann problems at the interface~\cite{balsara2012hllcrieman} or by employing suitable projections onto local normal directions, leading to families of quasi one-dimensional coupling problems. The latter strategy has recently been followed in~\cite{herty2018fluid} to couple compressible flows to structure models on a two-dimensional domain. While some higher order methods have been developped, see e.g.~\cite{banda2016numer, borsche2014aderschemhigh, du2026couplriemaneuler}, many existing techniques are embedded in a fixed numerical framework and do not admit systematic extensions to higher-order discretizations in time and space. This limitation is particularly apparent for finite volume methods, which often remain first-order accurate in space near the coupling interface~\cite{godlewski2005}, because higher-order reconstructions across interfaces—requiring consistent data from both sides and solutions to generalized half-Riemann problems—are difficult to construct.

In a recent series of works, a relaxation-based coupling strategy for hyperbolic systems has been developed~\cite{herty2023centr, herty2023centrschemtwo, kolbe2024numerschemcoupl}. The key idea has been to embed the original problem into the extended but linearised Jin--Xin relaxation system~\cite{jin1995relaxschemsystem}, which enables a treatment of the coupling interface not relying on Lax curves. Combined with an asymptotic-preserving discretization, this framework yields, in the relaxation limit, a simple central scheme for the original coupled problem. So far, the methodology has mainly been applied in one spatial dimension with first-order accurate interface approximations, yet it has already led to new numerical schemes for networked blood flow~\cite{beckers2025laxfried} and for two-phase fluid–structure interaction~\cite{kolbe2025relax}. In this context, different relaxation systems have also been used both to derive appropriate coupling conditions, see~\cite{borsche2018kineti, zhou2020consti}, and to discretize conservation laws with discontinuous flux functions~\cite{karlsen2003relaxschemconser}. A related direction is the construction of vanishing-viscosity solutions, as pursued in~\cite{egger2021, towers2022explicfinitvolum}, where schemes formally tailored to parabolic systems aim to capture the hyperbolic coupling in the small-viscosity limit.

In this work we propose a numerical framework for general multi-dimensional coupled hyperbolic systems generalizing the relaxation-based approach. In the multi-dimensional setting, the interface treatment of the relaxed problem is based on local projections at the interface points. As additional coupling conditions imposed on the relaxation variables are required for an adequate closure~\cite{cao2022constii} we provide a general recipe for their construction. On this basis, we develop a class of modal discontinuous Galerkin schemes of arbitrary polynomial order in space for the coupled problem and show that they arise naturally as the relaxation limit of a Godunov-type DG scheme for the coupled relaxation system. In analogy, we demonstrate that high-order strong-stability-preserving (SSP) time discretizations, see~\cite{gottlieb2001stronstabilpreser}, can be obtained by taking the relaxation limit in appropriately designed asymptotic-preserving implicit–explicit (IMEX) Runge–Kutta schemes.

The rest of the paper is organized as follows. In Section~\ref{sec:problemrelaxation}, we formally introduce the nonlinear hyperbolic coupling problem, its global relaxation and the quasi-one-dimensional localized relaxation system. The latter requires an adapted coupling condition, for which consistency with the original coupling is defined in Section~\ref{sec:relaxedcoupling}. Section~\ref{sec:riemann} discusses the corresponding coupled Riemann problem at the interface. Here we provide an explicit procedure in Algorithm~\ref{algo:rsconstruction} for the construction of a problem specific Riemann solver. In Section~\ref{sec:DG}, introducing a DG scheme for the coupled relaxation system with IMEX time discretization we derive a high order SSP-DG scheme for the original problem with final form provided in Section~\ref{sec:relaxationlimit}. Section~\ref{sec:example} is devoted to an example application of our approach to the fluid-structure coupling problem studied in~\cite{herty2018fluid}. Herein we derive an explicit Riemann solver for the 2D coupling problem and verify the efficiency of our approach in an numerical experiment, after which the paper is concluded in Section~\ref{sec:conclusion}.

\section{The coupled problem and localized relaxation systems}\label{sec:problemrelaxation}
In this section we first introduce the central model problem of this work in Section~\ref{sec:modelproblem} then apply global Xin-Jin relaxation in Section~\ref{sec:globalrelaxation} and consider a projection at an interface point in Section~\ref{sec:locrelaxationsystem}. In Section~\ref{sec:relaxedcoupling} we relate the coupling condition of the model problem to the one of the localized relaxation system.
\subsection{The model problem}\label{sec:modelproblem}
This work addresses coupled systems in the space $\R^d$ for $d\geq2$. Let $\Gamma \subset \R^d$ denote a smooth manifold of dimension $d-1$ that separates the space into two disjoint open subsets $\Omega_1$ and $\Omega_2$, i.e., it holds $\Omega_1 \sqcup \Omega_2 \sqcup \Gamma= \R^d$ and $\partial \Omega_1 \cap \partial \Omega_2 = \Gamma$. We consider the system
\begin{equation}\label{eq:system}
  \left\{
\begin{aligned}
  \frac{\partial \bU}{\partial t} + \sum_{j=1}^d \frac{\partial \bF_1^j(\bU)}{\partial x_j} &= \bzero \qquad \text{for }(t, \bx) \in (0, \infty) \times \Omega_1,\\
  \frac{\partial \bU}{\partial t} + \sum_{j=1}^d \frac{\partial \bF_2^j(\bU)}{\partial x_j} &= \bzero \qquad \text{for }(t, \bx) \in (0, \infty) \times \Omega_2
\end{aligned}
\right.
\end{equation}
with state variable $\bU$ given such that $ \bU(t, \bx) \in \mathcal{D}_1 \subseteq  \R^{m_1}$ if $\bx \in \Omega_1$ and $ \bU(t, \bx) \in \mathcal{D}_2 \subseteq  \R^{m_2}$ if $\bx \in \Omega_2$. The flux functions $\bF_i^j:\mathcal{D}_i \rightarrow \R^{m_i}$ for $j\in\{1,\dots, d\}$ and $i \in\{1,2\}$ are assumed to be continuously differentiable.
To connect both systems at the interface we impose the \emph{coupling condition}
\begin{equation}\label{eq:couplingu}
  \Psi_U^\bn(\bU(t, \bx^-) , \bU(t, \bx^+)) = \bzero \qquad \text{for a.e. } t \geq \bzero \quad \text{and }\bx \in \Gamma,
\end{equation}
where $\Psi_U^\bn: \mathcal{D}_1 \times \mathcal{D}_2 \to \R^\ell$ for some $\ell \in \N$ that depends on the problem at hand. By $\bn = \bn(\bx)$ we denote the unit normal of $\Gamma$ in $\bx$ that by convention points into $\Omega_2$ in this section. Note that through the normal vector the map in \eqref{eq:couplingu} depends on $\bx \in \Gamma$. Moreover, we use the notation $\bU(t, \bx^-)$ to refer to the left-sided limit (i.e.\, involving sequences $(\bx_n)_{n \in \N}\subset \Omega_1$) at time $t$ in direction $\bn$ and similarly the notation $\bU(t, \bx^+)$ to refer to the right-sided limit (i.e.\, involving sequences $(\bx_n)_{n \in \N}\subset \Omega_2$) at time $t$ in direction $\bn$.

We are interested in the Cauchy problem with respect to \eqref{eq:system} and \eqref{eq:couplingu} with initial data $\bU^0$ being in $L^\infty(\Omega_1)^{m_1}$ if restricted to $\Omega_1$ and in $L^\infty(\Omega_2)^{m_2}$ if restricted to $\Omega_2$ and such that
\[
  \Psi_U^\bn(\bU^0(\bx^-) , \bU^0(\bx^+)) = \bzero \qquad \forall \bx \in \Gamma
  \]
  holds for compatibility with the coupling condition.

\begin{figure}
  \centering
  \includegraphics{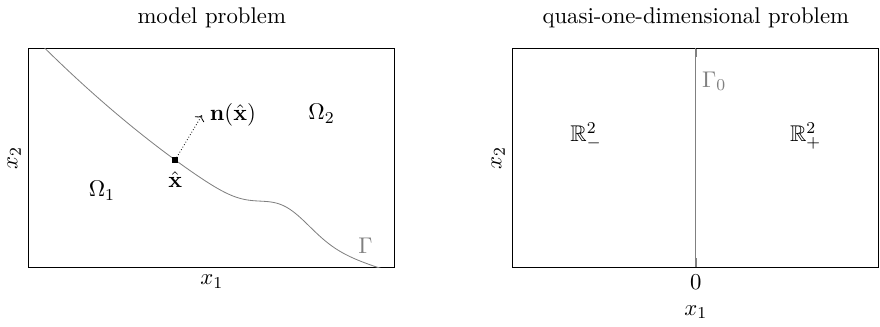}
  \caption{Domains of the model problem and the quasi-one-dimensional coupling problem in $\R^2$. While in the model problem two systems of conservation laws are coupled on a general interface the quasi-one-dimensional coupling originates from a localization of the problem at $\hat \bx \in \Gamma$.}\label{fig:2dcoupling}
\end{figure}

In this work we address this problem numerically. To handle the coupling we propose a local coordinate transform and projection of the fluxes for each interface point in the following. We refer to Figure~\ref{fig:2dcoupling} for a visual comparison of the domains corresponding to the model problem (left) and the projected quasi-one-dimensional local problem (right) in 2D.

\subsection{The global relaxation system}\label{sec:globalrelaxation}
In this section we introduce a Jin--Xin-type relaxation system, cf.~\cite{jin1995relaxschemsystem}, for the coupling problem from Section~\ref{sec:modelproblem}. To this end let $\bV^j(t, \bx)$ for $j \in \{1, \dots, d\}$ denote new auxiliary state variables as functions of time $t\in (0,\infty)$ and space $\bx \in \R^d$. These variables as well as the relaxation state $\bU$ maps to $\R^{m_1}$ if $\bx \in \Omega_1$ and to $\R^{m_2}$ if $\bx \in \Omega_2$. Both are governed by the system
\begin{equation}\label{eq:globrelaxationsystem}
  \left\{
    \begin{aligned}
      \frac{\partial \bU}{\partial t} + \sum_{j=1}^d\frac{\partial \bV^j}{\partial x_j} &= \bzero \qquad &&\text{for }(t, \bx) \in (0, \infty) \times \Omega \setminus \Gamma, \\
      \frac{\partial \bV^j}{\partial t} + (\bLambda_1^j)^2 \, \frac{\partial \bU}{\partial x_j} &= \frac{1}{\varepsilon} \left( \bF^j_1(\bU) - \bV^j \right) \qquad &&\text{for }j \in \{1, \dots, d\} \text{ and }(t, \bx) \in (0, \infty) \times \Omega_1, \\
       \frac{\partial \bV^j}{\partial t} + (\bLambda_2^j)^2 \, \frac{\partial \bU}{\partial x_j} &= \frac{1}{\varepsilon} \left( \bF^j_2(\bU) - \bV^j \right) \qquad &&\text{for }j \in \{1, \dots, d\} \text{ and }(t, \bx) \in (0, \infty) \times \Omega_2,
    \end{aligned}
  \right.
\end{equation}
in which $\varepsilon>0$ denotes the relaxation rate and $\bLambda_1^j \in \R^{m_1 \times m_1}$ as well as $\bLambda_2^j \in \R^{m_2 \times m_2}$ for $j \in \{1, \dots, d\}$ refer to diagonal matrices with positive diagonal-entries. Although the states $\bU$ and $\bV^j$ in \eqref{eq:globrelaxationsystem} depend on the relaxation rate $\varepsilon$ we neglect this dependency for brevity of notation. For the stability of our relaxation approach we require that the subcharacteristic conditions
\begin{equation}\label{eq:subcharacteristic}
  (\bLambda_i^j)^2 - (D\bF^j_i(\bU))^2 \geq \bzero \qquad \text{for }\bU \in \mathcal{D}_i,~ i \in \{1,2\}~ \text{and }j \in \{1, \dots, d\}
\end{equation}
hold in the sense of positive-semidefiniteness of the matrix on the left-hand side.

If we consider~\eqref{eq:globrelaxationsystem} restricted to either $\Omega_1$ or $\Omega_2$ and assume that~\eqref{eq:subcharacteristic} holds, an asymptotic analysis following the steps in~\cite{chen1994hyperconserlaws} shows that in the relaxation limit $\varepsilon \to 0$ the state $\bV^j$ coincides with $\bF^j_i(\bU)$ and the state $\bU$ satisfies the original multi-dimensional system of conservation laws
\[
   \frac{\partial \bU}{\partial t} + \sum_{j=1}^d \frac{\partial \bF_i^j(\bU)}{\partial x_j} = \bzero
\]
with $i=1$ in case of the negative half-space and $i=2$ in case of the positive half-space for any $j \in \{1, \dots, d\}$.

\begin{remark}
  For simplicity the matrices in~\eqref{eq:globrelaxationsystem} may be chosen independently of the direction, i.e.,
  \begin{equation}\label{eq:samelambdai}
    \bLambda_i^j = \bLambda_i \quad \text{for all }j \in\{1,\dots, d\} \text{ and } i\in\{1,2\}.
  \end{equation}
  Clearly, if matrices $\bLambda_i^j$ can be found that satisfy the subcharacteristic condition~\eqref{eq:subcharacteristic} taking the maximum over the direction $j$ with respect to the diagonal entries gives rise to a matrix $\bLambda_i$ that preserves this condition. Moreover, if we take $\lambda_i$ to be the maximal spectral norm of the directional Jacobians in system~\eqref{eq:system} on $\Omega_i$ then
  \begin{equation}\label{eq:relspeed}
    \bLambda_i = \lambda_i \, \bI \in \R^{m_i \times m_i} \quad \text{for }i\in\{1,2\}
  \end{equation}
  satisfies~\eqref{eq:subcharacteristic}, see \cite{kolbe2024numerschemcoupl}. In this situation we refer to $\lambda_1$ and $\lambda_2$ as \emph{relaxation speeds} of system~\eqref{eq:globrelaxationsystem}.
\end{remark}
To simplify the notation of the global relaxation system \eqref{eq:globrelaxationsystem} we denote by
\begin{equation}\label{eq:Q}
  \bQ=(\bU,~\bV^1,~\ldots,~\bV^d)^T \in \R^{(d+1)m_i}
\end{equation}
the combined state and define the vectors and matrices
\begin{equation}\label{eq:Aij}
  \mathcal{R}_i(\bQ) = \begin{pmatrix}
    \bzero \\
    \bF^1_i(\bU) - \bV^1 \\
    \vdots \\
    \bF^d_i(\bU) - \bV^d
    \end{pmatrix}, \quad
  \mathcal{A}_i^j = \begin{pmatrix}
    \bzero & \delta_{1j} \bI & \cdots & \delta_{dj} \bI \\
    \delta_{1j} (\bLambda_i^1)^2 & \bzero & \cdots &\bzero \\
    \vdots & \vdots & & \vdots \\
    \delta_{dj} (\bLambda_i^d)^2 & \bzero & \cdots & \bzero
    \end{pmatrix}
  \end{equation}
  for $i \in \{1,2\}$ and $j \in \{1, \dots, d\}$. Then system~\eqref{eq:globrelaxationsystem} recasts as
  \begin{equation}\label{eq:relaxationvector}
     \frac{\partial \bQ}{\partial t} + \sum_{j=1}^d \mathcal{A}_i^j \, \frac{\partial \bQ}{\partial x_j} =  \frac{1}{\varepsilon} \, \mathcal{R}_i(\bQ) \qquad \text{for }(t, \bx) \in (0, \infty) \times \Omega \setminus \Gamma,
   \end{equation}
   where the index $i=i(\bx)$ is adaptively taken such that $\bx \in \Omega_i$.

   To provide closure of the relaxation system~\eqref{eq:globrelaxationsystem} at the interface $\Gamma$ that is consistent with the coupling condition~\eqref{eq:couplingu} of the model problem~\eqref{eq:system} we consider a local transformation in the following subsection.

   \subsection{The localized relaxation system}\label{sec:locrelaxationsystem}
   In this section a normal projection of the global relaxation system~\eqref{eq:globrelaxationsystem} is considered that gives rise to a quasi-one-dimensional problem. The localized system is used to derive a suitable coupling condition for~\eqref{eq:globrelaxationsystem} and later to construct suitable numerical fluxes for our numerical method.

   At first, we fix a point $\hat \bx \in \Gamma$ and let $\bn = (n_1, \ldots, n_d)^T= \bn(\hat \bx)$ denote the unit normal of $\Gamma$ in $\hat \bx$. We further choose tangential vectors $\bt_1, \dots, \bt_{d-1}$ in $\hat \bx$ with respect to $\Gamma$ that together with $\bn$ form an orthonormal basis of $\R^d$. Thus $\bT_{\bn} = (\bn, \mathbf t_1, \dots, \mathbf t_{d-1})^T \in \R^{d \times d}$ is an orthogonal matrix enabling the coordinate transform  $\tilde \bx = \bT_{\bn} (\bx - \hat \bx)$. Thus, comparing the new coordinates to the original ones a shift in direction $\tilde x_1\eqqcolon \tilde x_\bn$ corresponds to a shift along $\bn$, whereas a shift in direction $\tilde x_j$ for $j\in\{2,\dots,d\}$ corresponds to a shift along a tangential direction to $\Gamma$ in $\hat \bx$.
   Next, we introduce the variable transforms
   \begin{equation}\label{eq:transform}
\begin{aligned}
  \tilde \bU(t, \tilde \bx) &= \bU(t, \bx(\tilde \bx)),\\
     \tilde \bV^\bn(t, \tilde \bx) \equiv \tilde \bV^1(t, \tilde \bx) &= \sum_{j=1}^d n_j\bV^j(t, \bx(\tilde \bx)) , \\
    \tilde \bV^{\bt_k}(t, \tilde \bx) \equiv \tilde \bV^{k+1}(t, \tilde \bx) &= \sum_{j=1}^d t_{k,j}\bV^j(t, \bx(\tilde \bx)) \quad \text{for }k \in \{1, \dots, d-1 \}
\end{aligned}
\end{equation}
with $t_{k,j}$ refering to component $j$ of $\mathbf t_k$ and $\bx(\tilde \bx) = \bT_{\bn}^T \tilde \bx + \hat \bx$. Combining the original and transformed relaxation variables in matrices, the transformations with respect to the auxiliary relaxation variables can be expressed as
\begin{equation}\label{eq:vtransform}
  (\tilde \bV^1, \dots, \tilde \bV^d)(t, \tilde \bx) = (\bV^1, \dots, \bV^d) (t, \bx(\tilde \bx)) \,\bT_{\bn}.
\end{equation}
We assume that there is no tangential flow along the interface, this implies that
\begin{equation}\label{eq:tangentialderivatives}
 \left. \frac{\partial \tilde \bU}{ \partial \tilde x_{j+1}} \right|_{(t, \bzero)} = \left. \frac{\partial \bV^{\bt_k}}{ \partial \tilde x_{j+1}} \right|_{(t, \bzero)} =\bzero \qquad \text{for }j,k \in \{1,\dots, d-1\} \quad \text{and a.e. } t\geq 0
\end{equation}
holds with respect to the left- and right-sided traces in $\tilde \bx=\bzero$, which is located on the coordinate transformed interface. 

The above transformation and localization at the interface point $\hat \bx$ motivates the quasi-one-dimensional relaxation system
   \begin{equation}\label{eq:relaxationprojected}
  \left\{
    \begin{aligned}
      \frac{\partial \tilde \bU}{\partial t} + \frac{\partial \tilde \bV^\bn}{\partial \tilde x_\bn} &= \bzero \qquad &&\text{for }(t, \bx) \in (0, \infty) \times \left( \R \setminus \Gamma_0 \right),\\
  \frac{\partial \tilde \bV^\bn}{\partial t} + (\bLambda_1^\bn)^2 \, \frac{\partial \tilde \bU}{\partial \tilde x_\bn} &= \frac{1}{\varepsilon} \left( \bF^\bn_1(\tilde \bU) - \tilde \bV^\bn \right) \qquad &&\text{for }(t, \bx) \in (0, \infty) \times \R^d_-,\\
      \frac{\partial \tilde \bV^\bn}{\partial t} + (\bLambda_2^\bn)^2 \, \frac{\partial \tilde \bU}{\partial \tilde x_\bn} &= \frac{1}{\varepsilon} \left( \bF^\bn_2(\tilde \bU) - \tilde \bV^\bn \right)  \qquad &&\text{for }(t, \bx) \in (0, \infty) \times \R^d_+
\end{aligned}
\right.
\end{equation}
which considers a quasi-one-dimensional coupling of the half-spaces $\R^d_- = (-\infty, 0) \times \R^{d-1}$ and $\R^d_+ = (0, \infty) \times \R^{d-1}$ at $\Gamma_0=\{0\} \times \R^{d-1}$, see Figure~\ref{fig:2dcoupling} (right). In analogy to the model \eqref{eq:system} its state variable $\bU$ is such that $ \bU(t, \bx) \in \mathcal{D}_1 $ if $\tilde x_\bn<0$ and such that $ \bU(t, \bx) \in \mathcal{D}_2$ if $\tilde x_\bn>0$. The remaining auxiliary variables connected to the tangential directions are governed by the ordinary differential equations
\begin{equation}\label{eq:relaxationprojected2}
  \left\{
    \begin{aligned}
      \frac{\partial \tilde \bV^{\bt_k} }{\partial t}  &= \frac{1}{\varepsilon} \left(\bF^{\bt_k}_1(\tilde \bU)  - \tilde \bV^{\bt_k} \right) \qquad &&\text{for }(t, \bx) \in (0, \infty) \times \R^d_-,\\
      \frac{\partial \tilde \bV^{\bt_k}}{\partial t}  &= \frac{1}{\varepsilon} \left(\bF^{\bt_k}_2(\tilde \bU) - \tilde \bV^{\bt_k} \right)  \qquad &&\text{for }(t, \bx) \in (0, \infty) \times \R^d_+
\end{aligned}
    \right.
  \end{equation}
  for $k \in \{1, \dots, d-1\}$, which follows from~\eqref{eq:tangentialderivatives}. The relaxation states $\tilde \bV^j$ for $j \in \{1, \dots, d\}$ map to $\R^{m_1}$ if $\tilde x_\bn < 0$ and to $\R^{m_2}$ if $\tilde x_\bn < 0$.

  While this system is defined on $\R^d$, it yields an approximation of system~\eqref{eq:globrelaxationsystem} in a neighborhood around $\hat \bx$ if it is constrained to a neighborhood around $\bx = \bzero$ after inverting the above change of variables. The flux functions employed in~\eqref{eq:relaxationprojected} are defined by
\begin{equation}\label{eq:normalflux}
\bF_i^{\bn} (\bU) \coloneqq \sum_{j=1}^d  \bF_i^j(\bU) \, n_j \qquad \text{for }i \in \{1,2\}
\end{equation}
and referred to as \emph{normal fluxes} in the following. Analogously the \emph{tangential fluxes} appearing in~\eqref{eq:relaxationprojected2} are given by

\[
  \bF^{\bt_k}_i = \sum_{j=1}^d \bF^j_i(\tilde \bU) \, t_{k,j} \qquad \text{for }i \in \{1,2\} \text{ and } k \in \{1, \dots, d-1\}.
\]
Similarly, the matrices $\bLambda_i^\bn$ for $i \in \{1,2\}$ are the diagonal matrices with positive entries satisfying
\[
  (\bLambda_i^\bn)^2 = \sum_{j=1}^d n_j \, (\bLambda_i^j)^2.
\]
Combining the original state and the normal relaxation state in the variable $\tilde \bQ^\bn = (\tilde \bU, \tilde \bV^\bn)$ we state the coupling condition for the localized system~\eqref{eq:relaxationprojected} as
\begin{equation}\label{eq:locrelcoupling}
  \Psi_Q^\bn (\tilde \bQ^\bn(t, \tilde \bx^-) , \tilde \bQ^\bn(t, \tilde \bx^+)) = \bzero \qquad \text{for a.e. } t \geq 0 \quad \text{and } \tilde \bx \in \Gamma_0,
\end{equation}
where the vector-valued map $\Psi_Q^\bn : \mathcal{D}_1 \times \R^{m_1} \times \mathcal{D}_2 \times \R^{m_2} \to \R^{\tilde \ell}$ for a problem-specific $\tilde \ell \in \N$ encodes the relevant coupling rules on both state variables at the interface. 
Note that unlike~\eqref{eq:couplingu} the normal in \eqref{eq:relcoupling} is independent of $\bx\in \Gamma_0$ but fixed with the choice of $\hat \bx$.

The quasi-one-dimensional coupled system given by \eqref{eq:relaxationprojected}, \eqref{eq:relaxationprojected2} and \eqref{eq:locrelcoupling} depends on the point $\hat \bx \in \Gamma$ within the original coupling problem from Section~\ref{sec:modelproblem} in the sense that the normal of the interface in $\hat \bx$ is used within the flux \eqref{eq:normalflux} and initial data will be adopted from the state $\bU$ in a neighborhood of $\hat \bx$.

\begin{remark}
  We have derived the localized relaxation system~\eqref{eq:relaxationprojected} by first relaxing the multi-dimensional system, second conducting a change of variables and third projecting into the normal direction. The order of these operations can be varied, e.g.\ relaxing the system after the change of variables or even in the last step after the change of variables and projection in normal direction; system~\eqref{eq:relaxationprojected} remains invariant of the order of those operations in the derivation. 
\end{remark}

\subsection{The relaxed coupling condition}\label{sec:relaxedcoupling}
Making use of the localization in Section~\ref{sec:locrelaxationsystem} we can close the global relaxation system~\eqref{eq:globrelaxationsystem} at the interface. To this end for any $\bx \in \Gamma$ with interface unit normal $\bn=\bn(\bx)$ let $\bV^\bn = \sum_{j=1}^d n_j \bV^j$ refer to the normal relaxation state in original coordinates and $\bQ^\bn = (\bU, \bV^\bn)$ to the combined normal relaxation state. Employing the map $\Psi_Q^\bn$ from~\eqref{eq:locrelcoupling} we impose the coupling condition
\begin{equation}\label{eq:relcoupling}
  \Psi_Q^\bn (\bQ^\bn(t, \bx^-), \bQ^\bn(t, \bx^+)) = \bzero \qquad \text{for a.e. } t \geq 0 \quad \text{and } \bx \in \Gamma,
\end{equation}
where through the normal vector $\bn$ the input states depend on the interface point $\bx$ and an analogous limit notation as in~\eqref{eq:couplingu} is used. We refer to~\eqref{eq:relcoupling} as \emph{relaxed coupling condition} in the following.

To consistently approximate the coupled system by the relaxation approach it is necessary that the relaxed coupling condition \eqref{eq:relcoupling} relates to the original condition \eqref{eq:couplingu}. Therefore, we adopt the result from our one-dimensional analysis in \cite{herty2023centrschemtwo} in this section.
Assuming the map $\Psi_Q^\bn$ to be smooth for all $\bn \in \R^d$ satisfying $\| \bn\|=1$ and taking the relaxation limit of a Chapman--Enskog expansion, see \cite{chapman1990mathemtheornonuniforgases}, motivates the following notion of consistency.

\begin{definition}\label{def:consistent}
For given $\bx \in \Gamma$ and $\bn=\bn( \bx)$ let $\bQ^\bn_i(\bU)$ refer to the combined states of~\eqref{eq:globrelaxationsystem} in the relaxation limit on $\Omega_i$ for $i \in \{1,2\}$, i.e. $\bQ^\bn_i(\bU) = (\bU, \bF^\bn_i(\bU))$. We say that the coupled relaxation system~\eqref{eq:globrelaxationsystem} is \emph{consistent} with the coupled problem~\eqref{eq:system} in $\bx \in \Gamma$ if the equivalence
\begin{equation}\label{eq:consistent}
  \Psi_U^\bn(\bU^- , \bU^+) = \bzero \quad \Leftrightarrow \quad \Psi_Q^\bn(\bQ^\bn_1 ( \bU^-), \bQ^\bn_2( \bU^+ )) = \bzero
\end{equation}
applies for all $\bU^- \in \mathcal{D}_1$ and $\bU^+ \in \mathcal{D}_2$ with respect to the coupling conditions~\eqref{eq:couplingu} and~\eqref{eq:relcoupling}. Moreover, if~\eqref{eq:consistent} holds for all $\bn \in \R^d$ with  $\| \bn\|=1$ we call both families of coupling functions, $(\Psi_U^\bn)_\bn$ and $(\Psi_Q^\bn)_\bn$ consistent.
\end{definition}

We use Definition~\ref{def:consistent} to construct problem-suited Riemann solvers for the numerical discretization in the following section.

\section{The Riemann problem at the interface}\label{sec:riemann}
The aim of this section is to construct a \emph{Riemann solver} for the quasi-one-dimensional coupled relaxation system introduced in Section~\ref{sec:locrelaxationsystem}. This Riemann solver will be employed in the multi-dimensional relaxation system \eqref{eq:globrelaxationsystem} and finally serve as a key component of the numerical scheme for the original coupling problem~\eqref{eq:system}. Here we discuss the general construction and some of its properties; details will depend on the problem at hand.

Throughout this section we fix an interface point $\hat \bx$ and the corresponding unit normal $\bn=\bn(\hat \bx)$. Our discussion focusses on the coupling problem given by \eqref{eq:relaxationprojected} and \eqref{eq:locrelcoupling}, where for simplicity we neglect the tilde notation and set $\bV \coloneqq \bV^\bn$ as well as by abuse of notation $\bQ \coloneqq \bQ^\bn$. Furthermore, we adopt the following terminology in analogy to~\cite{garavello2006traffflownetwor,herty2023centr}.

\begin{definition}\label{def:rp}
  The \emph{Riemann problem} for the quasi-one-dimensional coupled relaxation system is given by \eqref{eq:relaxationprojected} with coupling condition \eqref{eq:locrelcoupling} and constant initial data on both half planes, i.e.
  \[
    (\bU, \bV) (0, \bx) =
    \begin{cases*}
      (\bU^-, \bV^-) & if $\bx \in \R^d_-$, \\
      (\bU^+, \bV^+) & if $\bx \in \R^d_+$
    \end{cases*}
  \]
  for $(\bU^-, \bV^-) \in \mathcal{D}_1 \times \R^{m_1}$ and $(\bU^+, \bV^+) \in \mathcal{D}_2 \times \R^{m_2}$.
\end{definition}

\begin{figure}
  \centering
  \includegraphics{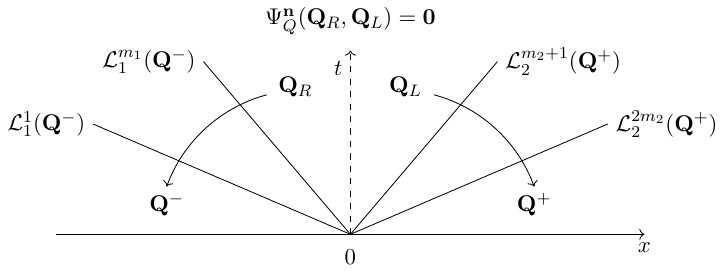}
 \caption{The coupled half-Riemann problem for the localized relaxation system in the $x$-$t$-plane. The left trace data $\bQ^{-}$ are connected to the coupling data $\bQ_R$ by Lax-curves of negative speeds, i.e., $\mathcal{L}_{1}^1\dots\mathcal{L}_{1}^{m_1}$ and the outgoing trace data $\bQ^{+}$ are connected to the coupling data $\bQ_L$ by Lax-curves of positive speeds, i.e., $\mathcal{L}_{2}^{m_2+1}\dots \mathcal{L}_2^{2m_1}$. Coupling data at the interface are related by the coupling condition $\Psi_Q^\bn$.}\label{fig:interfacewaves}
\end{figure}

\begin{definition}\label{def:rs} A \emph{Riemann solver} (RS) for the Riemann problem in Def.~\ref{def:rp} is a function $\rs$ mapping from $\mathcal{D}_1 \times \R^{m_1} \times \mathcal{D}_2 \times \R^{m_2}$ to itself,
  \[
    \rs: (\bU^-, \bV^-, \bU^+, \bV^+) \mapsto (\bU_R, \bV_R, \bU_L, \bV_L),
  \]
  such that the following properties apply:
  \begin{enumerate}
  \item The state $\bQ_R = (\bU_R, \bV_R)$ is a suitable right boundary datum of the Riemann problem at the interface in the sense that it connects to $\bQ^- = (\bU^-, \bV^-)$ on $\R^d_-$ by Lax curves that correspond to characteristics with negative speed. Analogously, $\bQ_L= (\bU_L, \bV_L)$ is a suitable left boundary datum at the interface and thus connects to $\bQ^+ = (\bU^+, \bV^+)$ on $\R^d_+$ by Lax curves that correspond to characteristics with positive speed.
  \item The \emph{coupling data} $\bQ_R$ and $\bQ_L$ satisfy the coupling condition, i.e.
    \begin{equation}\label{eq:statecoupling}
      \Psi_Q^\bn(\bQ_R, \bQ_L) = \bzero.
    \end{equation}
  \item The function $\rs$ is idempotent, i.e.\ if we take $(\bU_R, \bV_R, \bU_L, \bV_L)$ from its image, it holds
    \[
      \rs\left( \bU_R, \bV_R, \bU_L, \bV_L \right) = (\bU_R, \bV_R, \bU_L, \bV_L).
      \]
  \end{enumerate}
\end{definition}

Figure~\ref{fig:interfacewaves} illustrates the relation between the trace states $\bQ^-$ and $\bQ^+$ to the coupling data $\bQ_R$ and $\bQ_L$ given a RS satisfying Definition~\ref{def:rs}.

\subsection{The half-Riemann problem}\label{sec:halfRP}
We first address the half-Riemann problem that is a component of the first property in Def.~\ref{def:rs}. More precisely, it inquires after the set of valid boundary data at $\Gamma_0$ if the relaxation system \eqref{eq:relaxationprojected} is imposed either to the left or the right half-space.

Therefore, we conduct an eigenvalue analysis in analogy to the procedure in~\cite{herty2023centrschemtwo}. Clearly,~\eqref{eq:relaxationprojected} is a linear hyperbolic system with block form system matrix, which can be diagonalized
\begin{equation}\label{eq:diagonalized}
  \begin{pmatrix}
    \bzero & \bI \\
    (\bLambda_i^n)^2 & \bzero
  \end{pmatrix}
  = \mathbf R_i \mathbf D_i \mathbf R_i^{-1}, \qquad
  \mathbf R_i =
  \begin{pmatrix}
    -(\bLambda_i^\bn)^{-1} & (\bLambda_i^\bn)^{-1} \\
    \bI & \bI
  \end{pmatrix}, \quad
\mathbf D_i=
  \begin{pmatrix}
    -\bLambda_i^\bn & \bzero \\
    \bzero & \bLambda_i^\bn
  \end{pmatrix}.
\end{equation}
The columns of the matrix $\mathbf R_i$ give rise to the \emph{Lax curves} of the relaxation system, i.e.\ the locus comprising the states that given a starting point can be reached by a simple wave. In more details, the Lax curve corresponding to the $q$-wave for $q \in \{ 1, \dots, 2 m_i\}$ and the initial state $\bQ_0 = (\bU_0, \bV_0)$ reads
\[
  \mathcal{L}_i^q(\bQ_0; \sigma_q) =
  \begin{cases*}
    (\bU_0 - \sigma_q \mathbf e_q, \bV_0 + \sigma_q \lambda_q \mathbf e_q) & if $1 \leq q\leq m_i$ \\
    (\bU_0 + \sigma_{q} \mathbf e_{q-m_i}, \bV_0 + \sigma_q \lambda_{q-m_i} \mathbf e_{q-m_i}) & if $m_i+1 \leq q \leq 2 m_i$
  \end{cases*},
\]
where for $1\leq j \leq m_i$ $\mathbf e_j$ denotes the $j$-th unit vector in $\R^{m_i}$ and $\lambda_j$ the $j$-th diagonal entry of $\bLambda_i^\bn$ and $\sigma_q \in \R$ parameterizes the curve for $1\leq q \leq 2m_i$.

Following from the signs of the eigenvalues in \eqref{eq:diagonalized} $\bQ_R$ satisfies the first property of Def.~\ref{def:rs} if it is included in the span of the curves $\mathcal{L}_1^1(\bQ^-),\dots, \mathcal{L}_1^{m_1}(\bQ^-)$. Similarly, $\bQ_L$ is a suitable left boundary datum in the sense of Def.~\ref{def:rs} if it is included in the span of the curves $\mathcal{L}_2^{m_2 + 1}(\bQ^+),\dots, \mathcal{L}_1^{2 m_2}(\bQ^+)$. In other words, the above property holds iff there exist two vectors $\bSigma_1 \in \R^{m_1}$ and $\bSigma_2 \in \R^{m_2}$ such that
\begin{equation}\label{eq:parameterized}
  \bQ_R = (\bU^- - \bSigma_1, \bV^- + \bLambda_1^\bn \bSigma_1) \quad \text{and} \quad  \bQ_L = (\bU^+ + \bSigma_2, \bV^+ + \bLambda_2^\bn \bSigma_2).
\end{equation}

\subsection{Consistent Riemann solvers}
Given the parameterization \eqref{eq:parameterized} and the coupling function $\Psi_\bQ^\bn$ the coupling condition for the coupling data within the second condition of Def.~\ref{def:rs} implies a generally nonlinear system in the variables $\bSigma_1$ and $\bSigma_2$. Depending on the coupling function it is not clear, whether a solution to this system exists; in fact, unique solvability for any $\bQ^-$ and $\bQ^+$ is required to obtain a well-posed RS. Some results concerning its well-posedness in case of nonlinear coupling functions have been provided in \cite{herty2023centrschemtwo} along with an explicit representation of the coupling data in case of affine linear coupling functions.

Since we aim for a numerical solution of the unrelaxed model \eqref{eq:system} with coupling condition~\eqref{eq:couplingu} we are only interested in coupling functions $\Psi_\bQ^\bn$ that have been derived in consistency with the original coupling function $\Psi_\bU^\bn$ within condition \eqref{eq:couplingu}. Thus, we propose the following three-step procedure for the general construction of an RS:
\begin{algorithm}[Construction of the RS]\label{algo:rsconstruction}
~\\[-4mm]
\begin{enumerate}
\item To derive a relaxation coupling condition consistent with~\eqref{eq:couplingu} define
  \begin{equation}\label{eq:psiqconstruction}
    \Psi_\bQ^\bn((\bU^-, \bV^-), (\bU^+, \bV^+)) = (\Psi_\bU^\bn(\bU^-, \bU^+),  \Psi_R^\bn((\bU^-, \bV^-), (\bU^+, \bV^+))),
  \end{equation}
  i.e.\ copy the original coupling condition in the first components of the modified coupling condition and introduce the map $\Psi_R^\bn: \mathcal{D}_1 \times \R^{m_1} \times \mathcal{D}_2 \times \R^{m_2} \to \R^{\tilde \ell- \ell}$ accounting for the remaining conditions.
\item Populate the remainder $ \Psi_R^\bn((\bU^-, \bV^-), (\bU^+, \bV^+))$ with $\tilde \ell - \ell$ new conditions relating the states $\bV^-$ and $\bV^+$ by using the original condition~\eqref{eq:couplingu} and the limit properties $\bV^- = \bF^n_1(\bU^-)$ and $\bV^+ = \bF^n_2(\bU^+)$. To make consistent statements with the original coupling conditions it may be necessary to also include the original states $\bU^-$ and $\bU^+$.
\item In the system following from \eqref{eq:statecoupling} include \eqref{eq:parameterized} by substituting
  \begin{equation}\label{eq:laxrelation}
    \bV_R - \bV^- = \bLambda_1(\bU^- - \bU_R)  \quad \text{and} \quad  \bV_L - \bV^+ = \bLambda_2(\bU_L - \bU^+)
  \end{equation}
  and hence eliminating the variables $\bV_R$ and $\bV_L$. The result is a generally nonlinear system in $m_1 + m_2$ scalar unknowns and its solution governs the output of the RS.
\end{enumerate}
\end{algorithm}
\begin{proposition} Suppose that $\Psi_Q^\bn$ is such that the system obtained in step 2 of Algorithm~\ref{algo:rsconstruction} has a unique solution. Then the algorithm defines a RS.
\end{proposition}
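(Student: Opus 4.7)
The plan is to verify each of the three properties of Definition~\ref{def:rs} for the output $(\bU_R, \bV_R, \bU_L, \bV_L)$ produced by Algorithm~\ref{algo:rsconstruction}, under the standing assumption that the final nonlinear system admits a unique solution.

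First I would dispatch Property~1 (the Lax-curve connection) as an immediate corollary of the construction: step~3 explicitly performs the substitution \eqref{eq:laxrelation}, which is exactly the parameterization \eqref{eq:parameterized} of $\bQ_R$ and $\bQ_L$ along the negative- and positive-speed Lax curves identified in Section~\ref{sec:halfRP}. Hence any solution of the step-3 system automatically lies on the admissible Lax curves through $\bQ^-$ and $\bQ^+$, so $\bQ_R$ is a valid right boundary datum on $\R^d_-$ and $\bQ_L$ a valid left boundary datum on $\R^d_+$.

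Next I would verify Property~2 (the coupling condition). By step~1, the enlarged coupling map $\Psi_Q^\bn$ has the original $\Psi_U^\bn$ as its first $\ell$ components and the auxiliary map $\Psi_R^\bn$ from step~2 as its remaining $\tilde\ell-\ell$ components. The nonlinear system assembled in step~3 is precisely $\Psi_Q^\bn(\bQ_R, \bQ_L) = \bzero$ after eliminating $\bV_R$ and $\bV_L$ via \eqref{eq:laxrelation}, so its (by hypothesis unique) solution satisfies \eqref{eq:statecoupling} by definition.

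The only genuinely non-obvious step is Property~3 (idempotence), which I would establish as follows. Feeding $(\bU_R, \bV_R, \bU_L, \bV_L)$ back into the algorithm produces a new system seeking parameters $\bSigma_1', \bSigma_2'$ for which
\[
  \bQ_R' = (\bU_R - \bSigma_1',~\bV_R + \bLambda_1^\bn \bSigma_1'), \qquad \bQ_L' = (\bU_L + \bSigma_2',~\bV_L + \bLambda_2^\bn \bSigma_2')
\]
fulfill $\Psi_Q^\bn(\bQ_R', \bQ_L') = \bzero$. The choice $\bSigma_1' = \bSigma_2' = \bzero$ yields $\bQ_R' = \bQ_R$ and $\bQ_L' = \bQ_L$, which already satisfy the coupling condition from the first invocation (Property~2). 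Uniqueness forces this zero pair to be the only solution, so the algorithm reproduces its input and $\rs$ is idempotent.

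\textbf{Main obstacle.} Properties~1 and~2 are essentially bookkeeping hard-wired into the three steps of the algorithm. The one point requiring care is Property~3: without the uniqueness assumption, a nontrivial $(\bSigma_1', \bSigma_2')$ could in principle generate a second admissible coupling quadruple and break idempotency. Since the proposition assumes exactly this uniqueness, the entire statement reduces to a formal consequence of the design of Algorithm~\ref{algo:rsconstruction} together with the parameterization of the Lax curves derived in Section~\ref{sec:halfRP}.
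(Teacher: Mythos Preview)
Your proposal is correct and follows essentially the same route as the paper's proof: Properties~1 and~2 are dispatched as direct consequences of how Algorithm~\ref{algo:rsconstruction} is built, and idempotence is obtained by exhibiting the trivial candidate on the second invocation and invoking the uniqueness hypothesis. The only cosmetic difference is that you phrase the idempotence argument via the Lax-curve parameters $\bSigma_1'=\bSigma_2'=\bzero$, whereas the paper argues directly at the level of the states $\bU_R,\bU_L$ and then recovers the $\bV$-components from~\eqref{eq:laxrelation}; the two formulations are equivalent.
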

\begin{proof} Properties 1 and 2 in Def~\ref{def:rs} clearly follow from the construction in the algorithm and the well-posedness from the unique solvability of the governing system. To verify property~3 let $(\bU^-, \bV^-, \bU^+, \bV^+)$ denote the initial input data into the RS, $(\bU_R^1, \bV_R^1, \bU_L^1, \bV_L^1)$ the corresponding output and
  \[
    (\bU_R^2, \bV_R^2, \bU_L^2, \bV_L^2) = \rs(\bU_R^1, \bV_R^1, \bU_L^1, \bV_L^1).
  \]
  Since we have $\Psi_\bU^\bn(\bU_R^1, \bU_L^1)=\bzero$ due to the construction of the involved states, choosing $\bU_R^2 = \bU_R^1$ and $\bU_L^2 = \bU_L^1$ solves the first $\ell$ equations within the system $\Psi_\bQ^\bn(\bQ_R^2, \bQ_L^2)=\bzero$ because of~\eqref{eq:psiqconstruction}. This choice, moreover, implies $\bV_R^2 = \bV_R^1$ and $\bV_L^2 = \bV_L^1$ due to~\eqref{eq:laxrelation} and thus the substitution in step 3 leads to the same system as the one solved by the states $\bU_R^1$ and $\bU_L^1$ when they were computed. Considering the unique solvability of the governing system implies property~3.
\end{proof}

\section{Discontinuous Galerkin schemes}\label{sec:DG}
In this section we derive fully-discrete schemes for the coupled problem~\eqref{eq:system}. Those are based on a discontinuous Galerkin (DG) approximation in space and Runge--Kutta methods in time.
For their derivation we first introduce discrizations of the relaxation system~\eqref{eq:globrelaxationsystem} in space and time in Sections~\ref{sec:spacedisc} and \ref{sec:timedisc}. The final schemes for the original problem are then obtained taking the discrete relaxation limit in Section~\ref{sec:relaxationlimit}.

\subsection{The space discretization}\label{sec:spacedisc}
As a starting point, we discretize the global relaxation system~\eqref{eq:globrelaxationsystem} on the bounded computational domain $\Omega^c \subset \R^d$. This requires boundary conditions that need to be specified with respect to the application, see Section~\ref{sec:example}. We assume that $\Omega^c \cap \Gamma \neq \emptyset$ and define the computational subdomains $\Omega_1^c = \Omega_1 \cap \Omega ^c$ and $\Omega_2^c = \Omega_2 \cap \Omega ^c$ such that $\overline{\Omega_1^c} \cap \overline{\Omega_2^c} = \Gamma.$

We partition both subdomains by a finite number of cells $C_\mu$ such that the (disjoint) unions
\[
  \bar \Omega_1^c = \overline{\bigsqcup_{\mu \in \mathcal{I}_1} C_\mu} \quad \text{and} \quad \bar \Omega_2^c = \overline{\bigsqcup_{\mu \in \mathcal{I}_2} C_\mu}
\]
hold, where $\mathcal{I}_1$ and $\mathcal{I}_2$ are two disjoint index sets, whose union we denote by $\mathcal{I}= \mathcal{I}_1 \sqcup \mathcal{I}_2$. This partition implies that some of the cells have parts of their boundaries located on $\Gamma$ and $\mathring C_\mu \cap \Gamma = \emptyset$ for all $\mu \in \mathcal{I}$. We introduce the DG spaces
\begin{align*}
  \mathcal{S}_{i} &\coloneqq \{ f \in L^2(\Omega_i^c): ~ f \rvert_{C_\mu} \in \Pi_{p-1}(C_\mu), ~ \mu \in \mathcal{I}_i \} \qquad \text{for }i \in \{1,2\}, \\
  \mathcal{S}&\coloneqq \{ f \in L^2(\Omega^c):~f\rvert_{\Omega_1^C} \in \mathcal{S}_1 \text{ and } f\rvert_{\Omega_2^C} \in \mathcal{S}_2\}
\end{align*}
denoting by $\Pi_{p}(C_\mu)$ the space of polynomial functions on $C_\mu$ that are of degree smaller than $p \in \N$, i.e.,
\[
  \Pi_{p-1}(C) = \operatorname{span} \left\{ \mathbf x^{\mathbf j} \rvert_C:~ \mathbf j \in \N_0^d \text{ with } \|\mathbf j \|_\infty \leq p-1 \right\}.
\]
Here we use the notation $\mathbf x^{\mathbf j}=\Pi_{i=1}^d x_i^{j_i}$ to refer to monomials in $\R^d$. In the following the space of suitable multi-indices is denoted by $\mathcal{P} = \{ \mathbf j \in \N_0^d :~ \|\mathbf j \|_\infty \leq p-1\}$. Next, let $\Phi = \{ \mathbf \phi_{\mu, \mathbf j}: ~ \mu \in \mathcal{I}, ~ \mathbf j \in \mathcal{P} \}$ denote a basis of $\mathcal{S}$ with basis elements satisfying $\operatorname{supp}(\mathbf \phi_{\mu, \mathbf j}) = C_\mu$ that is orthogonal with respect to the standard inner product in $L^2(\Omega^c)$, i.e.\
\[
  \langle \mathbf \phi_{\mu, \mathbf j} , \mathbf \phi_{\mu^\prime, \mathbf j^\prime} \rangle_{L^2(\Omega^c)} = \delta_{\mu, \mu^\prime}~\delta_{\mathbf j, \mathbf j^\prime}.
\]
A semi-discretization of the combined state~\eqref{eq:Q} in system~\eqref{eq:relaxationvector} over $\Omega^c$ is then given by
\begin{equation}\label{eq:DGS}
\bQ_h(t, \bx) = \sum_{\mu \in \mathcal{I}} \sum_{\mathbf j \in \mathcal{P}} \bQ_{\mu, \bj}(t) \phi_{\mu, \bj}(\bx),
\end{equation}
where $\bQ_{\mu, \bj}: (0, T) \to \R^{(d+1)m_i}$ for $i$ such that $\mu \in \mathcal{I}_i$ and $T>0$ is the fixed final time. In analogy to \eqref{eq:Q} we introduce the partial coefficients $\bU_{\mu, \mathbf j}$, $\bV^j_{\mu, \mathbf j}:(0, T) \to \R^{m_i}$ as well as the partial semi-discrete states $\bU_h(t, \mathbf x)$ and $\bV^j_h(t, \mathbf x)$ for $j\in\{1,\dots,d\}$.

The DG scheme is obtained substituting~\eqref{eq:DGS} in the uniform relaxation system~\eqref{eq:globrelaxationsystem} testing with $w_h \in \mathcal{S}$ and making use of integration by parts: Find $\bQ_h(\cdot, t)$ for $t \in [0, T]$ such that for all $w_h \in \mathcal{S}$ and $\mu \in \mathcal{I}$ it holds
\begin{equation}\label{eq:DGweak}
  \int_{C_\mu} \frac{\partial \bQ_h}{\partial t} w_h \, dx + \int_{\partial C_\mu} \hat{\mathbf f}_\mu(\bQ_h^-, \bQ_h^+; \bn) w_h \, dS - \int_{C_\mu} \sum_{j=1}^d \mathcal{A}_i^j \bQ_h \frac{\partial w_h}{\partial x_j} \, dx = \frac{1}{\varepsilon} \int_{C_\mu} \mathcal{R}_i(\bQ_h) w_h \, dx
\end{equation}
for a numerical flux function $\hat{\mathbf f}$ and $i$ such that $\mu \in \mathcal{I}_i$. The arguments $\bQ_h^-$ and $\bQ_h^+$ of the numerical flux function refer to the inner and outer value of $\bQ_h$ with respect to the cell $C_\mu$ at any interface point and $\bn$ refers to the outer unit normal.

We choose Godunov/Upwind fluxes approximating the linear balance law~\eqref{eq:relaxationvector}. Let at first $\mu\in \mathcal{I}_i$ and the point $\bx \in \partial C_\mu$, at which we evaluate the numerical flux function, not be on the interface, i.e. $\bx \notin \Gamma$ and such that it is located at the boundary of exactly two cells\footnote{The quadrature formulas approximating the boundary integral will be chosen such that the quadrature points always satisfy the latter condition.}. In this case the numerical flux function can be written in flux-vector splitting form
\begin{equation}\label{eq:fluxdirsplitting}
\hat{\mathbf f}_\mu(\bQ_h^-, \bQ_h^+; \bn) = \mathcal{A}_i^{\bn, +} \, \bQ_h^- + \mathcal{A}_i^{\bn, -} \, \bQ_h^+.
\end{equation}
Here the matrices are based on the definition and the diagonalization
\begin{equation}\label{eq:Ain}
  \mathcal{A}_i^\bn \coloneqq \sum_{j=1}^d  n_j\, \mathcal{A}_i^j = \mathcal{R}_i^\bn \, \mathcal{D}_i^\bn \, (\mathcal{R}_i^\bn)^{-1}
\end{equation}
with $\mathcal{A}_i^j$ given in~\eqref{eq:Aij} and defined as $\mathcal{A}_i^{\bn,\pm} = \mathcal{R}_i^\bn \, \mathcal{D}_i^{\bn, \pm} \,(\mathcal{R}_i^\bn)^{-1}$, where $\mathcal{D}_i^{\bn, +}$ refers to $\mathcal{D}_i^\bn$ with all negative diagonal entries replaced by zero and analogously, $\mathcal{D}_i^{\bn, -}$ to $\mathcal{D}_i^\bn$ with all positive diagonal entries replaced by zero. We refer to Appendix~\ref{appx:diagoonalization} for the detailed matrices. Let $C_{\tilde \mu}$ refer to the adjacent cell, such that $\bx \in \partial C_{\tilde \mu}$ and $\mu \neq \tilde \mu$. If $\bn$ points towards $C_{\tilde \mu}$ the definition of the numerical flux implies that $\hat{\mathbf f}_\mu(\bQ_h^-, \bQ_h^+; \bn) = -\hat{\mathbf f}_{\tilde \mu}(\bQ_h^+, \bQ_h^-; -\bn)$.

\begin{remark}\label{rem:relaxationfluxformulas}
  Splitting the numerical flux with respect to the original state $\bU$ and the relaxation variables $\bV^1, \dots, \bV^d$ as
  \[
    \hat{\mathbf f}_\mu(\bQ_h^-, \bQ_h^+; \bn) = (\hat{\mathbf f}_\mu^\bU \,  \hat{\mathbf f}_\mu^{\bV^1} \ldots \hat{\mathbf f}_\mu^{\bV^d})^T (\bQ_h^-, \bQ_h^+; \bn)
  \]
  and considering again $\bx \notin \Gamma$ we observe that
  \begin{equation}\label{eq:relaxuflux}
    \hat{\mathbf f}_\mu^\bU(\bQ_h^-, \bQ_h^+; \bn) = \frac 12 \bSigma_i \left( \bU_h^- - \bU_h^+\right) + \frac 12  \left(\bV_h^{\bn,-} + \bV_h^{\bn,+}\right)
  \end{equation}
  and
  \begin{equation}\label{eq:relaxvflux}
    \hat{\mathbf f}_\mu^{\bV^j}(\bQ_h^-, \bQ_h^+; \bn) = \frac 12 \bSigma_i^{-1} (\bLambda_i^j)^2 \left( \bV_h^{\bn,-} - \bV_h^{\bn,+}\right) n_j + \frac{1}{2}  (\bLambda_i^j)^2  \left(\bU_h^{-} + \bU_h^{+}\right) n_j
  \end{equation}
  for $j \in \{1,\dots, n\}$. We have employed here the notations
  \begin{equation}\label{eq:Vhn}
    \bV_h^{\bn,\pm} = \sum_{j=1}^d n_j \bV_h^{j,\pm}\quad \text{and} \quad \bSigma_i = \sqrt{\sum_{j=1}^d n_j^2 (\bLambda_i^j)^2}
  \end{equation} and note that if~\eqref{eq:samelambdai} holds the fluxes~\eqref{eq:relaxuflux} and~\eqref{eq:relaxvflux} further simplify as $\bSigma_i=\bLambda_i^j$ for $j \in \{1,\dots,d\}$.
\end{remark}

To define the numerical flux perpendicular to the interface we consider two adjacent cells $C_\mu$ and $C_{\tilde \mu}$ that share a part of their common boundary with the interface. We fix the point $\hat \bx \in \Gamma \cap \partial C_\mu \cap \partial C_{\tilde \mu}$ and refer by $\bn$ to the unit normal in this point with respect to $\Gamma$. We assume that $\mu \in \mathcal{I}_1$, $\tilde \mu \in \mathcal{I}_2$ and $\bn$ points into $C_{\tilde \mu}$. Next, for a fixed time instance $t$ let $\bQ_h^- \in \R^{d(m_1 +1)}$ refer to the trace of the numerical solution from $\Omega_1^c$ in $\hat \bx$ and $\bQ_h^+\in \R^{d(m_2 +1)}$ to the trace of the numerical solution from $\Omega_2^c$ in $\hat \bx$, i.e.,
\begin{equation}\label{eq:interfacetraces}
  \bQ_h^- = \sum_{\bj \in \mathcal{P}} \bQ_{\mu, \bj}(t) \phi_{\mu, \bj}(\hat \bx) \quad \text{and} \quad \bQ_h^+ = \sum_{\bj \in \mathcal{P}} \bQ_{\tilde \mu, \bj}(t) \phi_{\tilde \mu, \bj}(\hat \bx).
\end{equation}
To compute the numerical flux in $\hat \bx$ we locally consider the projection introduced in Section~\ref{sec:locrelaxationsystem} with respect to this interface point and the normal $\bn$. Applying the variable transform~\eqref{eq:transform} we obtain the states $\tilde \bU^\mp$ and $\tilde \bV^{k,\mp}$ for $k \in \{1, \dots, d\}$ from $\bQ_h^\mp$. In the next step, suitable coupling data with respect to this trace data and the projected relaxation system given by~\eqref{eq:relaxationprojected} and \eqref{eq:relaxationprojected2} is computed. As no flow occurs in the directions $\mathbf t_1, \dots, \mathbf t_{d-1}$ and the corresponding relaxation variables are governed by system~\eqref{eq:relaxationprojected2} we set
\begin{equation}
\tilde \bV^{\bt_k}_R = \tilde \bV^{\bt_k,-} \quad \text{and} \quad \tilde \bV^{\bt_k}_L = \tilde \bV^{\bt_k,+} \quad \text{for }k \in \{1, \dots d-1\}.
\end{equation}
Coupling data regarding the relaxed state $\tilde \bU$ and the normal relaxation variable $\tilde \bV^\bn$ is obtained evaluating the RS constructed in Section~\ref{sec:riemann} and taking
\begin{equation}
(\tilde \bU_R, \tilde \bV_R^\bn, \tilde \bU_L, \tilde \bV_L^\bn) = \rs(\tilde \bU^-, \tilde \bV^{\bn,-}, \tilde \bU^+, \tilde \bV^{\bn,+}).
\end{equation}
Having derived the coupling data for the projected relaxation system we revert the variable transform using~\eqref{eq:vtransform}, which gives rise to the coupling states $\bU_{h,R}$, $\bU_{h,L}$, $\bV^j_{h,R}$, $\bV^j_{h,L}$ for $j \in \{1, \dots, d\}$ and therefore $\bQ_{h,R}$, $\bQ_{h,L}$ in original variables. Those are eventually used to evaluate the numerical flux functions as
\begin{equation}
  \hat{\mathbf f}_\mu(\bQ_h^-, \bQ_h^+; \bn) \coloneqq \hat{\mathbf f}_\mu(\bQ_h^-, \bQ_{h,R}; \bn) \quad \text{and} \quad 
  \hat{\mathbf f}_{\tilde \mu}(\bQ_h^+, \bQ_h^-; -\bn) \coloneqq \hat{\mathbf f}_{\tilde \mu}(\bQ_{h}^+, \bQ_{h,L}; -\bn),
\end{equation}
where common flux-vector splitting~\eqref{eq:fluxdirsplitting} is employed to compute the right-hand side expressions, see also Remark~\ref{rem:relaxationfluxformulas} for an explicit form of the numerical fluxes.

A compact semi-discrete scheme is obtained, choosing the test function $w_h = \phi_{\mu, \bj}$ in~\eqref{eq:DGweak} resulting in
\begin{align}\label{eq:DGsemidiscrete}
  \frac{\partial \bQ_{\mu, \bj}}{\partial t}  &= \int_{C_\mu} \sum_{j=1}^d \mathcal{A}_i^j \bQ_h \frac{\partial \phi_{\mu, \bj}}{\partial x_j} \, dx  - \int_{\partial C_\mu} \hat{\mathbf f}_\mu(\bQ_h^-, \bQ_h^+, \bn) \phi_{\mu, \bj} \, dS + \frac{1}{\varepsilon} \int_{C_\mu} \mathcal{R}_i(\bQ_h) \phi_{\mu, \bj} \, dx \notag \\
  &\coloneqq G_{\mu, \bj}(\bQ_h) - B_{\mu, \bj}(\bQ_h) + \frac{1}{\varepsilon} R_{\mu, \bj}(\bQ_h)
\end{align}
for all $\mu \in \mathcal{I}$ and $\bj \in \mathcal{P}$. The latter short-hand notation makes use of volume and boundary integrals depending on $\bj \in \mathcal{P}$ and $\mu \in \mathcal{I}$ and $\bQ_h$ with $\mu \in \mathcal{I}_i$ determining the corresponding computational subdomain.

\subsection{The time discretization}\label{sec:timedisc}
To derive a fully discrete scheme for the coupled relaxation system~\eqref{eq:globrelaxationsystem} a time discretization of~\eqref{eq:DGsemidiscrete} is required. We consider a partition of the time interval given by $t^n = \sum_{k=0}^{n-1} \Delta t^k$ with positive increments $\Delta t^n$. Additionally we employ the notations $\bQ_h^n$ and $\bQ_{\mu, \bj}^n$ to refer to our DG approximation and the corresponding coefficients at time instance $t^n$. We use analogue notations to refer to the components of the DG approximation.

In the finite volume setting an unsplit scheme for the relaxation scheme was proposed in~\cite{jin2012asympap}. An adaptation to our DG approach for the multidimensional case is given by
\begin{equation}\label{eq:unsplit}
\frac{\bQ_{\mu, \bj}^{n+1} - \bQ_{\mu, \bj}^{n}}{ \Delta t^n} = G_{\mu, \bj}(\bQ_h^n) - B_{\mu, \bj}(\bQ_h^n) + \frac{1}{\varepsilon} R_{\mu, \bj}(\bQ_h^{n+1}).
\end{equation}
The implicit discretization of the stiff source term here avoids stability issues and restrictive time increments that come along with a purely explicit scheme. As the source term vanishes in the components corresponding to the relaxed state $\bU$ the scheme~\eqref{eq:unsplit} may be written in an explicit form regarding the updated state $\bQ_{\mu, \bj}^{n+1}$ and thus the implicit term does not increase the computational cost over an explicit scheme. In the uncoupled case the scheme has been shown to be asymptotic preserving in the sense that the discrete relaxation limit yields a consistent scheme for the continuous limit system.

For stability of scheme~\eqref{eq:unsplit} it is sufficient to choose the time increments according to the CFL condition
\begin{equation}\label{eq:cfl}
\Delta t^n = \CFL \min_{i \in \{1,2\}} ~\min_{\mu \in \mathcal{I}_i} ~ \frac{\operatorname{diam}(C_\mu)}{\lambda_i},
\end{equation}
where $\lambda_i$ denotes the maximal diagonal-entry of $\bLambda_i$ and $\CFL \in (0,1]$.

The above scheme yields a first order approximation in time. Higher order discretization is achieved using implicit-explicit Runge-Kutta (IMEX-RK) schemes~\cite{pareschi2005implicexplicrunge} generalizing~\eqref{eq:unsplit}. We restrict the discussion to diagonally implicit IMEX-RK schemes, in which at first $s$ intermediate stages
\begin{equation}\label{eq:imexstages}
  \begin{aligned}
    \bQ_{\mu, \bj}^{n,\nu} =  \bQ_{\mu, \bj}^{n} & + \Delta t \sum_{\ell=1}^{\nu-1} \left( \tilde a_{\nu, \ell} \, G_{\mu, \bj}(\bQ_h^{n,\ell}) - \tilde a_{\nu, \ell} \, B_{\mu, \bj}(\bQ_h^{n,\ell}) +\frac{a_{\nu,\ell}}{\varepsilon}  \, R_{\mu, \bj} (\bQ_h^{n,\ell}) \right) \\
    & + \Delta t \frac{ a_{\nu,\nu}}{\varepsilon} \, R_{\mu, \bj} (\bQ_h^{n,\nu}), \qquad \nu\in\{1,\dots,s\}
  \end{aligned}
\end{equation}
are computed that are afterwards employed in the update formula
\begin{equation}\label{eq:imexupdate}
  \begin{aligned}
    \bQ_{\mu, \bj}^{n+1} =  \bQ_{\mu, \bj}^{n}
    &+ \Delta t \sum_{\nu=1}^{s} \tilde b_\nu \left(  \, G_{\mu, \bj}(\bQ_h^{n,\nu}) - B_{\mu, \bj}(\bQ_h^{n,\nu}) \right)  + \Delta t \sum_{\nu=1}^{s} \frac{b_\nu}{\varepsilon} R_{\mu, \bj} (\bQ_h^{n,\nu}).
  \end{aligned}
\end{equation}

\begin{table}
  \caption{Explicit embedded ($\tilde {\mathbf c}$, $\tilde \bA$, $\tilde{\mathbf b}$, left) and implicit embedded ($\mathbf c$, $\bA$, $\mathbf b$, right) tableau of the second order SSP2(2,2,2) scheme from~\cite{pareschi2005implicexplicrunge} assuming $\gamma=1-\frac{1}{\sqrt{2}}$.}\label{tab:ssp2}
  \centering
  \vspace{5pt}
  \begin{tabular}{>{$}c<{$} | >{$}c<{$} >{$}c<{$}}
    0 & 0 & 0 \\[3pt]
    1 & 1 & 0 \\[3pt] \hline
    \rule{0pt}{1.1\normalbaselineskip}  & \frac 12 & \frac 12
  \end{tabular}
  \hspace{0.2\linewidth}
  \begin{tabular}{>{$}c<{$} | >{$}c<{$} >{$}c<{$}}
    \gamma & \gamma & 0 \\[3pt]
    1-\gamma & 1-2\gamma & \gamma \\[3pt] \hline
    \rule{0pt}{1.1\normalbaselineskip}  & \frac 12 & \frac 12
  \end{tabular}
\end{table}

\begin{table}
  \caption{Explicit embedded ($\tilde {\mathbf c}$, $\tilde \bA$, $\tilde{\mathbf b}$, left) and implicit embedded ($\mathbf c$, $\bA$, $\mathbf b$, right) tableau of the first order unsplit scheme~\eqref{eq:unsplit}.}\label{tab:unsplit}
  \centering
  \vspace{5pt}
  \begin{tabular}{>{$}c<{$} | >{$}c<{$} >{$}c<{$}}
    0 & 0 & 0 \\
    1 & 1 & 0 \\ \hline
      & 1 & 0
  \end{tabular}
  \hspace{0.2\linewidth}
  \begin{tabular}{>{$}c<{$} | >{$}c<{$} >{$}c<{$}}
    0 & 0 & 0 \\
    1 & 0 & 1 \\ \hline
      & 0 & 1
  \end{tabular}
\end{table}

Here, $(\tilde b_\nu)_{\nu=1}^s=\tilde{\mathbf b} \in \R^s$ and $(\tilde a_{\nu,\ell})_{\nu,\ell=1}^s=\tilde \bA \in \R^{s \times s}$ refer to the tableau of the explicit embedded RK scheme  and  $(b_\nu)_{\nu=1}^s=\mathbf b \in \R^s$ and $(a_{\nu,\ell})_{\nu,\ell=1}^s= \bA \in \R^{s \times s}$ to the tableau of the implicit embedded RK scheme. We note that both $\bA$ and $\tilde \bA$ are lower triangular matrices and $\tilde \bA$ has only zero diagonal entries. Similar to standard Runge-Kutta schemes, increasing the number of stages may enhance the order in time. To achieve a specific order in an IMEX-RK scheme, both the usual order conditions for the coefficients of the embedded schemes and additional coupling conditions must be met, see e.g.~\cite{kennedy2003additrungekutta}.

In the context of hyperbolic systems \emph{strong-stability-preserving} (SSP) schemes, which are a family of explicit RK schemes, have been a popular choice as they are capable of preserving properties, such as being total-variation-diminishing or positivity preserving, of the Forward Euler approximation, see~\cite{gottlieb2001stronstabilpreser, gottlieb1998totalrungekutta, chertock2008seconorderposit}. We will adopt the following definition.
\begin{definition}\label{def:imexssp}
  We call the $s$-stage DG-IMEX-RK scheme given by~\eqref{eq:imexstages} and~\eqref{eq:imexupdate} DG-SSP$k$-IMEX scheme iff
  \begin{enumerate}[label=(\alph*)]
    \item the embedded explicit scheme ($\tilde \bA$, $\tilde{\mathbf b}$) is a k-th order SSP-RK scheme and
    \item the embedded implicit scheme ($\bA$, $\mathbf b$) satisfies $a_{k,k} \ne 0$ for all $k\in \{2,\ldots, s\}$ and if $a_{1,1}=0$ the IMEX-RK scheme is globally stiffly accurate, i.e.\ $\tilde b_\nu = \tilde a_{s,\nu}$ and $b_\nu = a_{s,\nu}$ for all $\nu \in \{1,\dots, s\}$.
  \end{enumerate}
\end{definition}

Note that regarding condition (a) in this definition the same SSP-RK scheme may be written in different tableaux by inserting zero lines in $\tilde \bA$ with corresponding zero-entries in $\tilde{\mathbf b}$. An example of an DG-SSP2-IMEX scheme is the 2-stage scheme with implicit and explicit tableau proposed in~\cite{pareschi2005implicexplicrunge} and shown in Table~\ref{tab:ssp2}. Moreover, a simple computation shows that the unsplit scheme~\eqref{eq:unsplit} is formally an DG-SSP1-IMEX scheme with embedded tableaux shown in Table~\ref{tab:unsplit}.

The above tables show the Butcher-tableaux in their common form including the vectors $(c_\nu)_{\nu=1}^s=\tilde{\mathbf c}$ and $(c_\nu)_{\nu=1}^s=\mathbf c$, which satisfy
\[
  c_\nu = \sum_{\ell=1}^s a_{\nu,\ell} \quad \text{and} \quad  \tilde c_\nu = \sum_{\ell=1}^s \tilde a_{\nu,\ell}\quad \text{for }\nu \in \{1,\dots, s\}
  \]
and do not appear in the discretization of autonomous systems such as~\eqref{eq:globrelaxationsystem}.

\subsection{The relaxation limit}\label{sec:relaxationlimit}
In this section we take the relaxation limit $\varepsilon \to 0$ and thereby derive a new scheme for the original coupled problem~\eqref{eq:system}. We adopt the discretization from Sections~\ref{sec:spacedisc} and \ref{sec:timedisc} and start given an $s$-stage DG-SSP$k$-IMEX scheme for the relaxation system~\eqref{eq:globrelaxationsystem}. For the stages~\eqref{eq:imexstages} we denote by $\bQ_h^{n, \nu}$ the DG approximation corresponding to the coefficients and refer to partial states in analogy to~\eqref{eq:Q}.

\begin{lemma}\label{lem:limit} Any $s$-stage DG-SSP$k$-IMEX scheme for the relaxation system~\eqref{eq:globrelaxationsystem} with initial data $\bQ_h^0=(\bU_h^0, V_h^{1,0}, \dots, V_h^{1,0})$, for which the compatibility condition
\begin{equation}\label{eq:visfu0}
  \bV_{\mu, \bj}^{k,0} = \int_{C_\mu} F_i^k(\bU_h^0) \phi_{\mu, \bj} \, dx 
\end{equation}
holds for all $\mu \in \mathcal{I}_i$, $\bj \in \mathcal{P}$, $k \in \{1,\dots, d\}$ and $i\in\{1,2\}$
satisfies the analogue condition
\begin{equation}\label{eq:visfustages}
  \bV_{\mu, \bj}^{k,n,\nu} = \int_{C_\mu} F_i^k(\bU_h^{n,\nu}) \phi_{\mu, \bj} \, dx 
\end{equation}
for all $\nu\in \{1, \dots, s\}$, $n\in \N$, $\mu \in \mathcal{I}_i$, $\bj \in \mathcal{P}$, $k \in \{1,\dots, d\}$ and $i\in\{1,2\}$ in the relaxation limit as $\varepsilon \to 0$.
\end{lemma}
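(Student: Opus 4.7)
The plan is to argue by nested induction, outer on the time index $n$ and inner on the stage index $\nu$, exploiting two structural features of system~\eqref{eq:globrelaxationsystem}: the $\bU$-component of $\mathcal{R}_i$ vanishes, so the stiff implicit term acts only on the relaxation coefficients and leaves the $\bU$-update purely explicit; and $L^2$-orthonormality of $\Phi$ collapses $R^{V^k}_{\mu,\bj}(\bQ_h)$ to the scalar affine expression $\int_{C_\mu} F^k_i(\bU_h)\phi_{\mu,\bj}\,dx - \bV^{k}_{\mu,\bj}$. Each stage equation for $\bV^k$ therefore becomes a diagonal implicit scalar relation with explicit root.

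Inside a fixed time step I would perform the inner induction on $\nu$. Because $\tilde a_{1,1}=0$ one has $\bU^{n,1}_{\mu,\bj}=\bU^n_{\mu,\bj}$. If $a_{1,1}=0$ then also $\bV^{k,n,1}_{\mu,\bj}=\bV^{k,n}_{\mu,\bj}$ identically, and stage-$1$ compatibility reduces to that of $\bV^{k,n}_{\mu,\bj}$ (supplied by the outer induction below). If $a_{1,1}\ne 0$, solving the diagonal implicit equation yields
\[
\bV^{k,n,1}_{\mu,\bj} \;=\; \frac{\varepsilon}{\varepsilon+\Delta t\,a_{1,1}}\,\bV^{k,n}_{\mu,\bj} \,+\, \frac{\Delta t\,a_{1,1}}{\varepsilon+\Delta t\,a_{1,1}}\int_{C_\mu} F^k_i(\bU^n_h)\phi_{\mu,\bj}\,dx,
\]
so that $\bV^{k,n,1}_{\mu,\bj}\to\int_{C_\mu} F^k_i(\bU^{n,1}_h)\phi_{\mu,\bj}\,dx$ as $\varepsilon\to 0$, irrespective of the actual value of $\bV^{k,n}_{\mu,\bj}$. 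For the inductive step $\nu>1$, inserting the orthonormality formula for $R^{V^k}(\bQ^{n,\nu}_h)$ into the stage equation~\eqref{eq:imexstages} produces
\[
\bV^{k,n,\nu}_{\mu,\bj} \;=\; \frac{\varepsilon}{\varepsilon+\Delta t\,a_{\nu,\nu}}\,\mathcal{E}^{n,\nu}_{\mu,\bj}(\varepsilon) \,+\, \frac{\Delta t\,a_{\nu,\nu}}{\varepsilon+\Delta t\,a_{\nu,\nu}}\int_{C_\mu} F^k_i(\bU^{n,\nu}_h)\phi_{\mu,\bj}\,dx,
\]
where $\mathcal{E}^{n,\nu}_{\mu,\bj}$ collects $\bV^{k,n}_{\mu,\bj}$, the explicit $(G-B)$-contributions of the earlier stages, and the $\varepsilon^{-1}$-terms $a_{\nu,\ell}R^{V^k}_{\mu,\bj}(\bQ^{n,\ell}_h)/\varepsilon$ for $\ell<\nu$. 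A back-substitution in the previously solved stages gives $R^{V^k}_{\mu,\bj}(\bQ^{n,\ell}_h)=O(\varepsilon)$, so $\mathcal{E}^{n,\nu}_{\mu,\bj}$ stays bounded and, since Definition~\ref{def:imexssp}(b) guarantees $a_{\nu,\nu}\ne 0$ for all $\nu\ge 2$, passing $\varepsilon\to 0$ establishes compatibility at stage~$\nu$.

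The outer induction must then carry compatibility across the update from $t^n$ to $t^{n+1}$, with base case $n=0$ supplied directly by assumption~\eqref{eq:visfu0}. If $a_{1,1}=0$, Definition~\ref{def:imexssp}(b) forces global stiff accuracy, so $\bQ^{n+1}_{\mu,\bj}=\bQ^{n,s}_{\mu,\bj}$ and the already established stage-$s$ compatibility is inherited by $\bV^{k,n+1}_{\mu,\bj}$; the inner induction at time $n+1$ then begins from a compatible state. If $a_{1,1}\ne 0$, the update formula~\eqref{eq:imexupdate} may produce an $\bV^{k,n+1}_{\mu,\bj}$ that is incompatible at finite $\varepsilon$, but this is harmless: the stage-$1$ analysis above shows that the implicit sweep of the next time step drives $\bV^{k,n+1,1}_{\mu,\bj}$ to $\int F^k_i(\bU^{n+1,1}_h)\phi_{\mu,\bj}\,dx$ in the limit regardless of the input, after which the inner induction again takes over.

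The main technical obstacle is the $\varepsilon$-bookkeeping inside $\mathcal{E}^{n,\nu}_{\mu,\bj}$: one must ensure that the apparently singular factors $a_{\nu,\ell}/\varepsilon$ multiplying $R^{V^k}_{\mu,\bj}(\bQ^{n,\ell}_h)$ stay bounded as $\varepsilon\to 0$. The cleanest way is to view the set of stage equations as a lower-triangular linear system for the scaled residuals $R^{V^k}_{\mu,\bj}(\bQ^{n,\ell}_h)/\varepsilon$, invert it using the nonzero diagonal guaranteed by Definition~\ref{def:imexssp}(b), and check directly that every resulting entry admits a finite limit; once this is in place, the two inductions close without further complications.
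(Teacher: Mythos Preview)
Your argument is correct and follows essentially the same route as the paper: a nested induction on $n$ and $\nu$, the case split on whether $a_{1,1}$ vanishes, and in the $a_{1,1}=0$ branch the use of global stiff accuracy to transfer stage-$s$ compatibility to stage~$1$ of the next step. The paper's version is terser---it multiplies~\eqref{eq:stageV} by $\varepsilon$ and passes to the limit rather than writing out your convex-combination formula---so your explicit tracking of the $O(\varepsilon)$ residuals and the boundedness of $\mathcal{E}^{n,\nu}_{\mu,\bj}$ is in fact more careful than what the paper spells out.
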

\begin{remark}
 Condition~\eqref{eq:visfustages} implies that $\bV_h^{k,n, \nu}$ is an $L^2$-projection of $\bF^k_1(\bU_h^{n,\nu})$ and $\bF^k_2(\bU_h^{n,\nu})$ on the DG spaces $\mathcal{S}_1^{m_1}$ and $\mathcal{S}_2^{m_2}$. Moreover, if $a_{1,1}\ne0$ the compatibility condition~\eqref{eq:visfu0} is not necessary for condition~\eqref{eq:visfustages} to hold.
\end{remark}
\begin{proof}[Proof of Lemma~\ref{lem:limit}]
 We first introduce the matrix
  \[
    \mathbf P^k_i =
    \begin{pmatrix}
      \bzero & \delta_{1,k} \bI & \cdots & \delta_{d,k} \bI
    \end{pmatrix} \in \R^{m_i \times m_i(d+1)}
  \]
  and then multiply~\eqref{eq:imexstages} by $\mathbf P^k_i$ to obtain
  \begin{equation}\label{eq:stageV}
    \begin{aligned}
    \bV_{\mu, \bj}^{k, n, \nu} =  \bV_{\mu, \bj}^{k,n} & +  \Delta t  \mathbf P^k_i \sum_{\ell=1}^{\nu-1} \left( \tilde a_{\nu, \ell} \, G_{\mu, \bj}(\bQ_h^{n,\ell}) - \tilde a_{\nu, \ell} \, B_{\mu, \bj}(\bQ_h^{n,\ell}) +\frac{a_{\nu,\ell}}{\varepsilon}  \, R_{\mu, \bj} (\bQ_h^{n,\ell}) \right) \\
                             & + \Delta t \frac{ a_{\nu,\nu}}{\varepsilon} \,  \mathbf P^k_i R_{\mu, \bj} (\bQ_h^{n,\nu})
    \end{aligned}
  \end{equation}
  to account for stage $\nu$ of the IMEX-RK scheme with respect to the variable $\bV^k$. If $a_{1,1} \ne 0$ multiplying~\eqref{eq:stageV} by $\varepsilon$ we get
  \begin{equation}\label{eq:Ris0}
    \bzero = \mathbf P^k_i R_{\mu, \bj} (\bQ_h^{n,\nu}) = \int_{C_\mu} \left( \bF_i^k(\bU_h^{n, \nu}) - \bV_h^{k,n,\nu} \right) \phi_{\mu, \bj} \, dx
  \end{equation}
  in the limit $\varepsilon \to 0$ for all $\nu \in\{1,\dots, s\}$ by induction over $\nu$. By orthogonality~\eqref{eq:Ris0} is equivalent to~\eqref{eq:visfustages} the statement follows in this case.

  If $a_{1,1}=0$ and the IMEX-RK scheme is globally stiffly accurate it holds for all $n\in\N$ that
  \[
    \bQ_h^{n,s} = \bQ_h^{n+1} = \bQ_h^{n+1,1}.
  \]
  Thus, from~\eqref{eq:visfu0} it follows~\eqref{eq:visfustages} and thereby also~\eqref{eq:Ris0} for $n=2$ and $\nu=1$ in the limit $\varepsilon \to 0$. By the above argument~\eqref{eq:Ris0} holds also for $n=2$ and $\nu\in \{2, \dots, s\}$. Eventually,~\eqref{eq:visfustages} follows for all $n \in \N$ and $\nu\in \{1, \dots, s\}$ by induction over $n$.
  \end{proof}

  Lemma~\ref{lem:limit} allows for a new scheme formulation of any DG-SSP$k$-IMEX scheme for the relaxation system~\eqref{eq:globrelaxationsystem} in the relaxation limit. To this end for $\bU_h \in \mathcal{S}^{m_i}$ let
\begin{equation}
  \mathcal{F}_i^k[\bU_h] = \sum_{\mu \in \mathcal{I}} \sum_{\mathbf j \in \mathcal{P}} \int_{C_\mu} \bF_i^k(\bU_h) \phi_{\mu, \bj} \, dx ~ \phi_{\mu, \bj}(\bx),
\end{equation}
denote the $L^2$-projection of $\bF_i^k(\bU_h)$ on $\mathcal{S}^{m_i}_i$ for $i \in \{1,2\}$. Neglecting the update of the relaxation variable, the semi-discrete scheme in the relaxation limit takes the form
\begin{align}\label{eq:DGUsemidiscrete}
  \frac{\partial \bU_{\mu, \bj}}{\partial t}  &= \int_{C_\mu} \sum_{k=1}^d  \mathcal{F}_i^k[\bU_h] \frac{\partial \phi_{\mu, \bj}}{\partial x_j} \, dx  - \int_{\partial C_\mu} \hat{\mathbf g}_\mu(\bU_h^-, \bU_h^+; \bn) \phi_{\mu, \bj} \, dS \notag\\ 
  &\coloneqq H_{\mu, \bj}(\bU_h) - C_{\mu, \bj}(\bU_h).
\end{align}
For any $\bx \in \partial C_\mu$ residing on the boundary of two cells with $\bx \notin \Gamma$ and $\mu\in \mathcal{I}_i$ the numerical flux takes the form
\begin{equation}\label{eq:numuflux}
  \hat{\mathbf g}_\mu (\bU_h^-, \bU_h^+; \bn) = \frac 12 \bSigma_i (\bU_h^- - \bU_h^+) + \frac12 \left( \mathcal{F}_i^{\bn}[\bU_h](\bx^-) + \mathcal{F}_i^{\bn}[\bU_h](\bx^+) \right),
\end{equation}
where we have employed the notations
\begin{equation}\label{eq:VFhn}
 \mathcal{F}_i^{\bn}[\bU_h] = \sum_{j=1}^d n_j \mathcal{F}_i^{j,\pm}[\bU_h]\quad \text{and} \quad \bSigma_i = \sqrt{\sum_{j=1}^d n_j^2 (\bLambda_i^j)^2}
\end{equation}
and $\bx^\mp$ refer to the inner and outer limit in $\bx$ with respect to the cell $C_\mu$.

Next, we consider the case, where $\hat \bx$ resides on the boundary of the cells $C_\mu\subset \Omega^C_1$ and $C_{\tilde \mu} \subset \Omega^C_2$ and thus $\hat \bx \in \Gamma$. In analogy to \eqref{eq:interfacetraces} let $\bU_h^-$ and $\bU_h^+$ denote the interface traces in $\hat \bx$ and $\bn$ the normal in $\hat \bx$ with respect to $\Gamma$ pointing towards $\Omega_2^C$. Then the fluxes are given by
\begin{equation}\label{eq:numfluxuinterface}
  \begin{aligned}
    \hat{\mathbf g}_\mu (\bU_h^-, \bU_h^+; \bn) &= \frac 12 \bSigma_1 (\bU_h^- - \bU_R) + \frac12 \left( \mathcal{F}_1^{\bn}[\bU_h](\hat \bx^-) + \bV_R^\bn \right),\\
    \hat{\mathbf g}_{\tilde \mu} (\bU_h^+, \bU_h^-; -\bn) &= \frac 12 \bSigma_2 (\bU_h^+ - \bU_L) - \frac12 \left( \mathcal{F}_2^{\bn}[\bU_h](\hat \bx^+) + \bV_L^\bn \right)
  \end{aligned}
\end{equation}
with coupling states defined by the RS from Section~\ref{sec:riemann} and
\begin{equation}\label{eq:rsu}
  (\bU_R, \bV_R^\bn, \bU_L, \bV_L^\bn) = \rs(\bU_h^-,  \mathcal{F}_i^{\bn}[\bU_h](\hat \bx^-), \bU_h^+,  \mathcal{F}_i^{\bn}[\bU_h](\hat \bx^+)).
\end{equation}
The states $\bV_R^\bn$ and $\bV_L^\bn$ that appear in~\eqref{eq:numfluxuinterface} and~\eqref{eq:rsu} are computable from $\bU_h$ and do \emph{not} require a discretization of an auxiliary relaxation variable.

Regarding the time discretization the DG-SSP$k$-IMEX scheme given by \eqref{eq:imexstages} and \eqref{eq:imexupdate} recovers in the relaxation limit the purely explicit DG-SSP$k$-RK scheme with stages
\begin{equation}\label{eq:sspstages}
  \begin{aligned}
    \bU_{\mu, \bj}^{n,\nu} =  \bU_{\mu, \bj}^{n} & + \Delta t \sum_{\ell=1}^{\nu-1} \left( \tilde a_{\nu, \ell} \, H_{\mu, \bj}(\bU_h^{n,\ell}) - \tilde a_{\nu, \ell} \, C_{\mu, \bj}(\bU_h^{n,\ell})  \right),  \qquad \nu\in\{1,\dots,s\}
  \end{aligned}
\end{equation}
and update formula
\begin{equation}\label{eq:sspupdate}
  \begin{aligned}
    \bU_{\mu, \bj}^{n+1} =  \bU_{\mu, \bj}^{n}
    &+ \Delta t \sum_{\nu=1}^{s} \tilde b_\nu \left(  \, H_{\mu, \bj}(\bU_h^{n,\nu}) - C_{\mu, \bj}(\bU_h^{n,\nu}) \right).
  \end{aligned}
\end{equation}
In particular, the first and second order IMEX schemes in Tables~\ref{tab:unsplit} and \ref{tab:ssp2} recover the forward Euler and the SSP2 scheme, respectively. We summarize the analysis of this section in the following theorem.

\begin{theorem}\label{thm:limit}
In the relaxation limit $\varepsilon \to 0$ the DG-SSP$k$-IMEX scheme for the coupled relaxation system~\eqref{eq:globrelaxationsystem} given by~\eqref{eq:DGsemidiscrete}, \eqref{eq:imexstages} and \eqref{eq:imexupdate} recovers a DG-SSP-RK scheme for the original coupled system~\eqref{eq:system} that is given by~\eqref{eq:DGUsemidiscrete}, \eqref{eq:sspstages} and \eqref{eq:sspupdate}.
\end{theorem}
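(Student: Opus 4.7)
The plan is to project the IMEX-RK scheme onto its $\bU$-block and invoke Lemma~\ref{lem:limit} to eliminate every occurrence of the auxiliary relaxation variables $\bV^{k}_h$ in favour of the $L^2$-projections $\mathcal{F}_i^k[\bU_h]$. The proof naturally decomposes into three identifications: the spatial volume term, the numerical flux away from $\Gamma$, and the numerical flux at $\Gamma$. The time stepping then follows almost immediately, because the stiff source term affects only the $\bV$-block.

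First I would apply the block projection $\mathbf{P}_i = (\bI,\bzero,\dots,\bzero) \in \R^{m_i \times (d+1)m_i}$ to the stage equation~\eqref{eq:imexstages}. Because the source term $\mathcal{R}_i(\bQ)$ defined in~\eqref{eq:Aij} vanishes in its first block, the implicit terms $a_{\nu,\ell}R_{\mu,\bj}(\bQ_h^{n,\ell})/\varepsilon$ and $a_{\nu,\nu}R_{\mu,\bj}(\bQ_h^{n,\nu})/\varepsilon$ drop out of the $\bU$-component entirely, and what remains is a purely explicit stage relation involving only $\mathbf{P}_i G_{\mu,\bj}(\bQ_h^{n,\ell})$ and $\mathbf{P}_i B_{\mu,\bj}(\bQ_h^{n,\ell})$. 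The same argument applied to the update~\eqref{eq:imexupdate} removes the implicit tableau $(\bA,\mathbf b)$ from the $\bU$-block, leaving the explicit tableau $(\tilde\bA,\tilde{\mathbf b})$, which by Definition~\ref{def:imexssp} is an SSP-RK scheme of order $k$; this already matches the shape of~\eqref{eq:sspstages} and~\eqref{eq:sspupdate}.

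Second I would invoke Lemma~\ref{lem:limit} to replace $\bV_h^{k,n,\nu}$ by $\mathcal{F}_i^k[\bU_h^{n,\nu}]$ in the limit $\varepsilon \to 0$. For the volume term, $\mathbf{P}_i G_{\mu,\bj}(\bQ_h^{n,\nu}) = \int_{C_\mu}\sum_{k=1}^d \bV_h^{k,n,\nu}\partial_{x_k}\phi_{\mu,\bj}\,dx$ becomes $\int_{C_\mu}\sum_{k=1}^d \mathcal{F}_i^k[\bU_h^{n,\nu}]\partial_{x_k}\phi_{\mu,\bj}\,dx = H_{\mu,\bj}(\bU_h^{n,\nu})$ from~\eqref{eq:DGUsemidiscrete}. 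For the boundary term at an interior quadrature point $\bx \notin \Gamma$, I would use the explicit formula~\eqref{eq:relaxuflux} from Remark~\ref{rem:relaxationfluxformulas}; substituting the limit identity $\bV_h^{\bn,\pm} = \mathcal{F}_i^\bn[\bU_h](\bx^\pm)$ reproduces exactly the flux~\eqref{eq:numuflux} with $\bSigma_i$ as in~\eqref{eq:VFhn}, confirming that $\mathbf{P}_i B_{\mu,\bj}(\bQ_h^{n,\nu}) = C_{\mu,\bj}(\bU_h^{n,\nu})$ off $\Gamma$.

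Third, and this is the delicate part, I would treat the interface boundary flux. At $\hat\bx \in \Gamma$ the flux on each side is still~\eqref{eq:relaxuflux}, but evaluated with the coupling states $\bQ_{h,R}$ or $\bQ_{h,L}$ produced by the Riemann solver $\rs$ fed with traces $(\bU_h^\mp, \bV_h^{\bn,\mp})$. In the relaxation limit Lemma~\ref{lem:limit} forces $\bV_h^{\bn,\mp}$ to equal $\mathcal{F}_i^\bn[\bU_h](\hat\bx^\mp)$, so the RS input is precisely the one appearing in~\eqref{eq:rsu}; inserting its output into~\eqref{eq:relaxuflux} gives the two expressions in~\eqref{eq:numfluxuinterface}. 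The main obstacle here is bookkeeping: one must verify that the limit can be taken inside the RS, which is legitimate because in the relaxation limit the input data converge pointwise (by Lemma~\ref{lem:limit}) and by idempotency of $\rs$ (property~3 of Definition~\ref{def:rs}) the coupling states depend continuously on the input. Combining the three identifications with the SSP structure already established in the first step yields the DG-SSP$k$-RK scheme~\eqref{eq:DGUsemidiscrete}, \eqref{eq:sspstages}, \eqref{eq:sspupdate}, completing the proof.
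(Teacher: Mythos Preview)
Your proposal is correct and follows essentially the same route as the paper: the theorem is stated there as a summary of the derivation in Section~\ref{sec:relaxationlimit}, which proceeds exactly by projecting onto the $\bU$-block (where the stiff source vanishes), invoking Lemma~\ref{lem:limit} to replace each $\bV_h^{k,n,\nu}$ by $\mathcal{F}_i^k[\bU_h^{n,\nu}]$, and then identifying the volume term, the interior Godunov flux via~\eqref{eq:relaxuflux}, and the interface flux via the Riemann solver with limit inputs. One minor remark: idempotency of $\rs$ does not by itself yield continuity in the input data, so your justification for passing the limit through the Riemann solver is not quite the right one; the paper simply substitutes the limit inputs without comment, and a clean justification would instead appeal to the explicit (algebraic) form of the coupling states produced by Algorithm~\ref{algo:rsconstruction}.
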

\begin{remark}\label{rem:lxf}
  If we ignore the coupling at the interface, restrict the polynomial order taking $p=1$, simplify the matrices of the relaxation system assuming both \eqref{eq:samelambdai} and \eqref{eq:relspeed} and choose the unsplit scheme~\eqref{eq:unsplit} for the time discretization the classical Lax-Friedrichs scheme on the subdomains $\Omega_1^C$ and $\Omega_2^C$ is obtained in the limit $\varepsilon \to 0$.
\end{remark}

\subsection{The implementation}\label{sec:implementation}
In our numerical computations we approximate the solution of the coupled problem by piecewise quadratic polynomials ($p=3$). We only compute the numerical solution in the relaxation limit employing the scheme derived in Section~\ref{sec:relaxationlimit} and use the SSP3 method to discretize in time. To decrease the numerical viscosity, we adopt local Lax--Friedrichs numerical fluxes away from the interface and thus compute the velocities within the matrix $\bSigma_i$ in \eqref{eq:numuflux} based on the local states $\bU_h^-$ and $\bU_h^+$. For the flux computation at the interface according to \eqref{eq:numfluxuinterface} the matrices $\bSigma_1$ and $\bSigma_2$ are chosen such that they maximize those velocities with regard to the corresponding domain in every time step. Taking adaptive time increments with Courant number $\text{CFL}=0.7$ numerical stability has been verified in our computations.

Moreover, we employ the Shu limiter within our DG scheme to ensure a total variation bound of the numerical solution \cite{cockburn1989tvbrungekuttagaler}. In the solution update a five-point Gauss quadrature is used. For efficiency we rely on an adaptive mesh refinement (AMR) approach based on a multiresolution analysis using multiwavelets~\cite{hovhannisyan2014adaptgaler}. Further details on this strategy and its implementation are given in~\cite{gerhard2016adaptgaler, gerhard2022}. 

\section{A fluid-structure coupling example}\label{sec:example}
For a proof of concept we next apply our approach to the coupling of a linear-elastic structure with an inviscid fluid. To this end, the coupling problem is specified in Section \ref{sec:FSI-problem}, a relaxation based Riemann solver is constructed in Sections~\ref{sec:FSI-relaxation} and~\ref{sec:FSI-RS} and numerical results are presented in Section~\ref{sec:FSI-results}.

\subsection{The fluid-structure coupling model}\label{sec:FSI-problem}
In this section we briefly summarize the two coupled models and provide coupling conditions. While the coupling problem is here introduced for general $d\in\{1,2, 3\}$, in our numerical computations in Section~\ref{sec:FSI-results} we focus on the case of a two-dimensional spatial domain. 

\paragraph{The linear-elastic solid model}
We adopt the elastic structure model
\begin{equation}\label{eq:elastic}
  \left\{
    \begin{aligned}
      \ddt \bw &- \frac{1}{\rho_s} \nabla \cdot \sigma = 0,\\
      \ddt \mathbf \sigma &- \lambda (\nabla \cdot \bw)I - \mu (\nabla \bw + \nabla \bw^T)= 0
    \end{aligned}\right. \qquad \qquad (t,\bx) \in (0,\infty) \! \times \Omega_1
\end{equation}
for the deformation velocity $\bw=(w_1,\dots, w_d)^T$ and the symmetric shear stress tensor $\sigma = (\sigma_{i,j})_{i,j=1}^d =\sigma^T\in \R^{d \times d}$ of a solid. Model parameters are the material density $\rho_s$ and the Lamé constants $\lambda, \mu >0$, which relate to the dilatation wave velocities $c_1, c_2> =0$ by $c_1^2 = \frac{2}{\rho_s}(\mu + \lambda)$ and $c_2^2= \frac{\mu}{\rho_s}$.

The structure model is based on the continuum mechanics conservation principles for mass, momentum, angular momentum, and energy. The following additional assumptions are made: (i) the stress tensor is symmetric, (ii) temperature is constant, (iii) density variations in the solid are significantly smaller than in the liquid, (iv) displacements in the solid are negligible, and (v) the solid material is homogeneous and isotropic. For further details on the linear-elastic model and its assumptions, refer to~\cite{dickopp2013coupl, herty2018fluid}.

Letting model~\eqref{eq:elastic} account for the dynamics on $\Omega_1$ of the coupled problem~\eqref{eq:system} and dropping the redundant entries of $\sigma$, the state space is of dimension $m_1=\frac{3}{2}d + \frac{d^2}{2}$ and the flux functions can be expressed as $\bF_1^j(\bU) = \bA_1^j\, \bU$ for $j\in\{1.\dots,d\}$. In particular, for $d=2$ the state takes the form $\bU = (w_1,w_2,\sigma_{11},\sigma_{12},\sigma_{22})^T$ and taking $\alpha= 1 - 2 (c_2/c_1)^2$ the flux functions are determined by
\begin{equation}
  \bA_1^1 =
   - \rho_s^{-1}\,
   \begin{pmatrix}
     0 & 0 & 1 & 0 & 0 \\
     0 & 0 & 0 & 1 & 0 \\
    \rho_s^2 c_1^2 & 0 & 0 & 0 & 0 \\
     0 &\rho_s^2 c_2^2 & 0 & 0 & 0 \\
    \alpha\,\rho_s^2 c_1^2 & 0 & 0 & 0 & 0
   \end{pmatrix},\quad
\bA_1^2 =
   - \rho_s^{-1}\,
   \begin{pmatrix}
     0 & 0 & 0 & 1 & 0 \\
     0 & 0 & 0 & 0 & 1 \\
     0 & \alpha\,\rho_s^2 c_2^2 & 0 & 0 & 0 \\
    \rho_s^2 c_2^2 & 0 & 0 & 0 & 0 \\
     0 & \rho_s^2 c_1^2  & 0 & 0 & 0
   \end{pmatrix} .
 \end{equation}

\paragraph{The compressible fluid model}  To model fluid flow we employ the inviscid compressible Euler equations, which read
\begin{equation}\label{eq:fluid}
  \left\{
  \begin{aligned}
          \ddt \rho + \nabla \cdot (\rho \, \bv) &= 0, \\[5pt]
      \ddt (\rho \bv) + \nabla \cdot (\rho \, \bv \otimes \bv + p I) &= 0, \\[5pt]
      \ddt (\rho \,E) + \nabla \cdot \left(\rho \, \bv \left(E+ \frac{p}{\rho}\right) \right)  &= 0
\end{aligned}\right. \qquad \qquad (t,\bx) \in (0,\infty) \! \times \Omega_2
\end{equation}
for the density $\rho$, the momentum $\rho\,\bv$ and the total energy $\rho\,E$.
By $\bv=(v_1,\dots, v_d)^T\in \R^d$ and $p$ we refer to the velocity and the pressure of the fluid. The fluid is assumed to be a stiffened gas, meaning that the equation of state
\begin{align}
\label{eq:EoS}
 p = (\gamma-1)\,\rho\, e - \gamma\,\pi 
\end{align}
holds, where $\gamma>1$ and $\pi \ge 0$ denote the ratio of heat capacities and the pressure stiffness.
The internal energy $e$ appearing in \eqref{eq:EoS} is related to the total energy by
\begin{align}
\label{eq:rhoE}
  E = e + \frac{1}{2} |\bv|^2 .
\end{align}
We call a state of the fluid model~\eqref{eq:fluid} \emph{physically admissible} if the density is positive and the pressure satisfies the bound $p \geq -\pi$.

Imposing the fluid model on $\Omega_2$ of the coupled problem~\eqref{eq:system} we note that the space state is of dimension $m_2=d + 2$. In the case $d=2$ the state reads $\bU = (\rho,\rho\,v_1,\rho\,v_2,\rho\,E)^T$ and the fluxes $\bF^1_2$ and  $\bF^2_2$ are given by
\begin{equation}\label{eq:fluid-2d}
  \begin{aligned}
    \bF_2^1(\bU) &=  (\rho\,v_1,\rho\,v_1^2 + p,\rho\,v_1\,v_2,  v_1\,(\rho \, E + p))^T, \\
    \bF_2^2(\bU) &=  (\rho\,v_2,\rho\,v_1 v_2, \rho\,v_2^2 + p, v_2\,(\rho \, E + p))^T.
  \end{aligned}
\end{equation}

\paragraph{Coupling condition}
At the interface $\Gamma$ we follow \cite{dickopp2013coupl} and impose the equality of normal velocities, i.e., 
\begin{align}\label{eq:cplvelocity}
  \bn^T\, \bw(t, \bx^-)  = \bn^T \,\bv(t, \bx^+)  \qquad \text{for a.e. } t \geq 0 \quad \text{and }\bx \in \Gamma.
\end{align}
In addition, the solid stress and the fluid pressure should be related by 
\begin{align}
  \label{eq:cplstress}
 \bn^T \sigma(t, \bx^-) \bn = -p(t, \bx^+)  \qquad \text{for a.e. } t \geq 0 \quad \text{and }\bx \in \Gamma.
\end{align}
Consequently, using the notations $w_\bn\coloneqq \mathbf n^T\, \bw$, $v_\bn \coloneqq \mathbf n^T\, \bv$ and $\sigma_\bn\coloneqq \bn^T \sigma \bn$ the coupling function~\eqref{eq:couplingu} for our problem reads
\begin{align}
\label{eq:coupled-system-multid-coupling-u}
\Psi^\bn_U(\bU(t, x^-),\bU(t, x^+)) \coloneqq
\begin{pmatrix}
  w_\bn-\frac{\rho\,v_\bn}{\rho}\\[5pt]
  \sigma_\bn +(\gamma-1)\left(\rho\,E- \frac{|\rho \bv |^2}{2 \rho} \right) - \gamma\, \pi
\end{pmatrix} ,
\end{align}
where for readability we drop the time and space dependence on the right-hand side.

\subsection{The coupled fluid-structure relaxation system}\label{sec:FSI-relaxation}

In this section we consider a Jin-Xin type relaxation of the coupling problem introduced in Section~\ref{sec:FSI-problem}. Due to the linearity of the structure model, relaxation is not required to handle the coupling of the subproblem in $\Omega_1$. Relaxing only the fluid part of the model, we avoid increasing the number of variables in the structure model and thereby reduce the total number of coupling conditions needed. Thus, the global relaxation system~\eqref{eq:globrelaxationsystem} and the localized relaxation system given by~\eqref{eq:relaxationprojected} and~\eqref{eq:relaxationprojected2} simplify. Considering the transformation~\eqref{eq:transform} with respect to an arbitrary interface point $\hat x \in \Gamma$ and neglecting the tilde the latter takes the form
   \begin{equation}\label{eq:FSI-relaxationprojected}
 \left\{
   \begin{aligned}
      \pdiff{\bU}{t} + A_1^\bn\,\pdiff{\bU}{x_\bn} &= \bzero \qquad
      &\text{for }(t, \bx) \in (0, \infty) \times \R^d_-\\
     \pdiff{\bU}{t} + \pdiff{\bV^\bn}{x_\bn} &= \bzero \qquad
      &\text{for }(t, \bx) \in (0, \infty) \times \R^d_+,\\
     \pdiff{\bV^\bn}{t} + (\bLambda_2^\bn)^2 \, \pdiff{\bU}{x_\bn} &= \frac{1}{\varepsilon} \left( \bF^\bn_2(\bU) - \bV^\bn \right)  \qquad &\text{for }(t, \bx) \in (0, \infty) \times \R^d_+,
   \end{aligned}
 \right.
\end{equation}
where
\begin{equation}\label{eq:FSI-fluxes}
  A_1^\bn = \sum_{j=1}^d A_1^j\,n_j ,\quad
  \bF^\bn_2(\bU)  = \sum_{j=1}^d \bF_2^j(\bU)\,n_j.
\end{equation}
The auxiliary variables connected to the tangential directions $\bt_1, \dots, \bt_{d-1}$ are governed by the ordinary differential equations
\begin{equation}
\label{eq:FSI-relaxationprojected2}
    \begin{aligned}
      \pdiff{\bV^{\bt_k}}{t}  &= \frac{1}{\varepsilon} \left(\bF^{\bt_k}_2(\bU) - \bV^{\bt_k} \right)  \qquad &&\text{for }(t, \bx) \in (0, \infty) \times \R^d_+
\end{aligned}
  \end{equation}
  The localized relaxation system \eqref{eq:FSI-relaxationprojected} is closed by coupling condition~\eqref{eq:locrelcoupling} with $\tilde \bQ^\bn(t, \bx) = \bU(t, \bx)$ for $\bx\in\Omega_1$ and $\tilde \bQ^\bn(t, \bx) = (\bU(t, \bx),\bV^n(t, \bx))$ for $\bx\in\Omega_2$ and the vector-valued map $\Psi_Q^\bn : \mathcal{D}_1  \times \mathcal{D}_2 \times \R^{m_2} \to \R^{\tilde \ell}$.
  The number of coupling conditions ${\tilde \ell}$ depends on the left-propagating waves of the linear-elastic model and the right-propagating waves of the relaxed fluid model. While by our analysis in Section~\ref{sec:halfRP} the relaxed fluid model accounts for $m_2$ right-propagating waves, the number of relevant waves of the structure model is determined by the eigenvalues of the flux Jacobian $\bA_1^\bn$ in~\eqref{eq:FSI-fluxes}.

  \paragraph{The relaxed fluid-structure coupling condition in 2D} In the following we design the relaxed coupling condition~\eqref{eq:relcoupling} corresponding to~\eqref{eq:coupled-system-multid-coupling-u} in the case $d=2$. Without loss of generality we assume the normal $\bn = (1, 0)^T$ and thus have $w_\bn = w_1$, $\sigma_n=\sigma_{11}$ and $v_\bn=v_1$. In the structure model
the eigenvalues of $A_1^\bn = A_1^1$ and their corresponding right eigenvectors are
\begin{equation}\label{eq:eigenstructure-linelast-2d}
  \begin{aligned}
  & \lambda_{1,1\pm} = \pm c_1,\quad
  \lambda_{1,2\pm} = \pm c_2,\quad
  \lambda_{1,0}=0, \\
  & \br_{1\pm} =  (1,0,\mp\rho\, \c_1,0,\mp\alpha\,\rho\,c_1)^T,\
    \br_{2\pm} =   (0, 1, 0 ,\mp \rho\, c_2,0)^T,\
    \br_{0} =  (0, 0, 0 ,0,-1/\beta)^T
  \end{aligned}
\end{equation}
with $\beta \coloneqq c_1^2-c_2^2 > 0$, see \cite{herty2018fluid} for details.
Eigenvalues and corresponding right eigenvectors for the relaxed fluid model are taken from~\eqref{eq:diagonalized}. We conclude that the coupling states are determined by six relevant waves. As condition~\eqref{eq:coupled-system-multid-coupling-u} is independent of the quantities $v_2$ and $\sigma_{12}$ and thus of the wave corresponding to eigenvalue $\lambda_{1,2-}$ and eigenvector $\br_{2-}$ the number of coupling conditions is chosen according to the remaining number of relevant waves ${\tilde \ell} = 5$. Hence, in addition to the two conditions within~\eqref{eq:coupled-system-multid-coupling-u} three additional ones are needed that become redundant in the relaxation limit. Following Alg.~\ref{algo:rsconstruction}, Step 1 and 2, we choose
\begin{align}\label{eq:coupling-q-fsi}
\Psi_Q^\bn(\bU^-,(\bU^+,\bV^+)) \coloneqq
\begin{pmatrix}
  w_1^-- \frac{[\rho\,v_1]^+}{\rho^+}\\[5pt]
  \sigma_{11}^- + (\gamma-1)\left( [\rho\,E]^+ - \frac{ ([\rho v_1]^+)^2 + ([\rho v_2]^+)^2}{2 \rho^+} \right) - \gamma\, \pi\\[5pt] 
   \frac{[V^{\rho}]^+}{\rho^+} - w_1^- \\[5pt]
 \frac{([\rho\,v_1]^+)^2}{\rho^+}-[V^{\rho\,v_1}]^+ - \sigma_{11}^-\\[5pt]
  [V^{\rho\,v_2}]^+ - [\rho\,v_2]^+\,  w_1^-
\end{pmatrix},
\end{align}
where we employ the notation $\bV = (V^{\rho}, V^{\rho\,v_1}, V^{\rho\,v_2}, V^{\rho\,E})^T$ to refer to the components of the auxiliary variable. A simple computation shows that
$\Psi_Q(\bU^-,(\bU^+,\bF_2^\bn(\bU^+)))= \bzero$ iff $\Psi_U\left(\bU^-,\bU^+\right) = \bzero$, i.e., the relaxed coupling condition~\eqref{eq:coupling-q-fsi} is consistent with the original coupling condition~\eqref{eq:coupled-system-multid-coupling-u} according to Def.~\ref{def:consistent}.

\subsection{The fluid-structure Riemann solver}\label{sec:FSI-RS}
In this section we derive the RS for the coupled relaxation system~\eqref{eq:FSI-relaxationprojected} in the case $d=2$ given the coupling condition~\eqref{eq:coupling-q-fsi}. We make the simplification~\eqref{eq:relspeed} within the relaxation system and choose $\bLambda_2^1 = \bLambda_2^2 = \lambda_2 I$ for a sufficiently large relaxation speed $\lambda_2$ for the relaxed Euler equations.

Since only the fluid model has been relaxed, the RS is a map from $\mathcal{D}_1\times \mathcal{D}_2 \times \R^4$ to itself,
  \begin{equation}\label{eq:rs-fsi}
    \rs: (\bU^-,\bU^+, \bV^+) \mapsto (\bU_R,  \bU_L, \bV_L).
  \end{equation}
  In the following we use an analogue notation for the state components as in~\eqref{eq:coupling-q-fsi}.
  To construct the coupling data $\bU_R, \bU_L$ and $\bV_L$ from the trace data $\bU^-, \bU^+, \bV^+$ we follow step~3 of Algorithm~\ref{algo:rsconstruction} by solving the associated two half-Riemann problems, in which the states $\bU_R$ and $(\bU_L,\bV_L)$ are located on Lax curves of negative and positive speeds of the structure model and the fluid model, respectively, and coupled by condition~\eqref{eq:coupling-q-fsi}. The half-Riemann problems imply the conditions
  \begin{equation}\label{eq:lax-fsi}
    \bU_R = \bU^- - \Sigma_1 \, \br_{1-} - \Sigma_2 \, \br_{2-}  \quad\text{and}\quad \bV_L- \bV^+ = \lambda_2 (\bU_L - \bU^+)
  \end{equation}
  for two parameters $\Sigma_1$ and $\Sigma_2 \in \R$. The first condition follows from~\eqref{eq:eigenstructure-linelast-2d}. As argued in Section~\ref{sec:FSI-relaxation} our coupling condition is independent of the wave corresponding to eigenvalue $\lambda_{1,2-}$ and eigenvector $\br_{2-}$ and thus \eqref{eq:coupling-q-fsi} cannot determine $\Sigma_2$. By convention and motivated by the idempotency of the RS we choose $\Sigma_2=0$ and thus obtain
\begin{equation}\label{eq:w2r}
  [w_2]_R = w_2^-\quad \text{and} \quad [\sigma_{12}]_R=\sigma_{12}^-.
\end{equation}
The second condition in~\eqref{eq:lax-fsi} for the relaxed liquid model follows from~\eqref{eq:laxrelation} and allows us to express the variable $\bV_L$ in terms of $\bU_L$. Substituting in the required coupling condition $\Psi_Q^\bn(\bU_R,(\bU_L,\bV_L)) = \bzero$ we obtain the system
\begin{equation}\label{eq:fsi-nonlinearsystem}
\left\{
  \begin{aligned}
     w_1^- - \Sigma_1 - \frac{[\rho\,v_1]_L}{\rho_L} &=0,\\[3pt]
  \sigma_{11}^-  - \Sigma_1 \,\rho_s c_1 + (\gamma-1)( [\rho\,E]_L - \frac{([\rho\,v_1]_L)^2  + ([\rho\,v_2]_L)^2}{2\,\rho_L} - \gamma\, \pi &=0,\\[3pt]
  \frac{[V^\rho]^+ + (  \rho_L - \rho_0^+ ) \lambda_2}{\rho_L}-  w_1^- + \Sigma_1 &=0,\\[3pt]
  \frac{[\rho\,v_1]_L^2}{\rho_L} - [V^{\rho\,v_1}]^+ - (  [\rho\,v_1]_L - [\rho\,v_1]^+ ) \lambda_2  - \sigma_{11}^- + \Sigma_1 \, c_1 \rho_s &=0,\\[3pt]
 [V^{\rho\,v_2}]^+ + (  [\rho\,v_2]_L - [\rho\,v_2]^+ ) \lambda_2 + \left( \Sigma_1 - w_1^- \right) [\rho\,v_2]_L&=0
  \end{aligned}\right.
\end{equation}
with unknown variables $\Sigma_1$, $\rho_L$, $[\rho\,v_1]_L$, $[\rho\,v_2]_L$ and $[\rho\,E]_L$. We identify a solution by the following steps:
 first we employ the first equation of~\eqref{eq:fsi-nonlinearsystem} to express $\Sigma_1$ in terms of $\rho_L$ and $[\rho\,v_1]_L$, i.e., $\Sigma_1(\rho_L, [\rho\,v_1]_L)$. Substituting the latter into equations two, three and five of~\eqref{eq:fsi-nonlinearsystem} we obtain explicit expression for the variables $[\rho\,E]_L= [\rho\,E]_L(\rho_L, [\rho\,v_1]_L,[\rho\,v_2]_L)$, $[\rho\,v_1]_L=[\rho\,v_1]_L(\rho_L)$ and $[\rho\,v_2]_L=[\rho\,v_2]_L(\rho_L,[\rho\,v_1]_L)$. Finally we substitute our expressions for $\Sigma_1$ and $[\rho\,v_1]_L$ in equation four of~\eqref{eq:fsi-nonlinearsystem} and multiply by $\rho_L$, which assuming a physically admissible coupling state is positive. The result is a linear equation in $\rho_L$ with an explicitly computable solution. The remaining variables are determined by backward substitution. We obtain, in particular,
  \begin{equation}\label{eq:RS-euler}
    \begin{aligned}
      \rho_L   & = \frac{\rho^+(\lambda_2  - v_1^+) (\rho^+ (\lambda_2 -  v_1^+) + c_1\rho_s )}{(\lambda_2  -  v_1^+)^2\rho^+ + (\lambda_2  - w_1^-) c_1\rho_s + \sigma_{11}^-  + p^+},\\
  [\rho \, v_1]_L & = [\rho v_1]^+ + (  \rho_L - \rho^+) \lambda_2, \qquad  [\rho_L \, v_2]_L  =\rho^+ v_2^+ \frac{ \lambda_2- v_1^+}{\lambda_2-[v_1]_L}, \\
     [\rho\, E]_L & =   \frac{(w_1^- - [v_1]_L) \rho_s c_1 + \gamma\, \pi -\sigma_{11}^- }{\gamma-1} + \frac{1}{2} ( [\rho v_1]_L^2 + [\rho v_2]_L^2)  
\end{aligned}
\end{equation}
for the coupling state $\bU_L$ and further, inserting the explicit form of $\Sigma_1$ in the first condition of~\eqref{eq:lax-fsi} and using~\eqref{eq:w2r},
\begin{equation}\label{eq:RS-structure}
  \begin{aligned}
    [w_1]_R  &=  [v_1]_L, & [w_2]_R  &=  w_2^-, \qquad [\sigma_{11}]_R = \sigma_{11}^- - (w_1^- - [v_1]_L) \rho_s c_1,\\
 [\sigma_{12}]_R & = \sigma_{12}^- , &[\sigma_{22}]_R  &=  \sigma_{22}^- - (w_1^- - [v_1]_L) \alpha \rho_s c_1 .
  \end{aligned}
\end{equation}
for the coupling state $\bU_R$, determining together with the second condition in \eqref{eq:lax-fsi} the RS~\eqref{eq:rs-fsi}.

\begin{proposition}\label{prop:physical-states}
  Suppose that the relaxation speed $\lambda_2$ is such that the subcharacteristic condition~\eqref{eq:subcharacteristic} holds, and $\bU^+$ is a physically admissible state of the fluid model~\eqref{eq:fluid} satisfying the conditions
\begin{equation}
  \begin{aligned}\label{eq:couplingbound}
    v_1^+ + c^+ > w_1^+, \qquad \sigma_{11}^-> -(p^+ + [c^+]^2 \rho^+), \\
    (p^+- \pi) (\rho_s c_1 + c^+ \rho^+) \geq (\lambda_2-v_1^+)  c^+ \left( \sigma_{11}^- + p^+ + v_1^+ - w_1^- \right)
  \end{aligned}
\end{equation}
with $c^+ = \sqrt{\gamma \,\frac{p^+ + \pi}{\rho^+}}$ referring to the speed of sound. Then $\bU_L$ is a physically admissible state of~\eqref{eq:fluid}.  
\end{proposition}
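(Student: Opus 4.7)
The plan is to verify the two conditions of physical admissibility, namely $\rho_L > 0$ and $p_L \geq -\pi$, separately, working directly from the closed-form expressions in~\eqref{eq:RS-euler}. The subcharacteristic condition~\eqref{eq:subcharacteristic} applied to $\bLambda_2^\bn = \lambda_2 \bI$ yields $\lambda_2 \geq v_1^+ + c^+$, and this together with the three hypotheses in~\eqref{eq:couplingbound} will be the only tools needed. Throughout it is convenient to write $\delta \coloneqq \lambda_2 - v_1^+ > 0$.

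For the density, I would inspect numerator and denominator of the explicit formula for $\rho_L$ in turn. The numerator is manifestly positive from $\delta > 0$ and $\rho^+, \rho_s, c_1 > 0$. For the denominator, I split $\delta^2 \rho^+ = (c^+)^2 \rho^+ + (\delta^2 - (c^+)^2)\rho^+$; the first summand combined with $\sigma_{11}^- + p^+$ is positive by the second inequality in~\eqref{eq:couplingbound}, the bracket $\delta^2 - (c^+)^2$ is nonnegative by the subcharacteristic bound, and the remaining term $(\lambda_2 - w_1^-) c_1 \rho_s$ is positive because the first inequality in~\eqref{eq:couplingbound} together with the subcharacteristic bound yields $\lambda_2 \geq v_1^+ + c^+ > w_1^-$. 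Hence $\rho_L$ is a strictly positive ratio of two strictly positive quantities.

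For the pressure bound I first reduce $p_L$ to a tractable form. Substituting the formula for $[\rho E]_L$ from~\eqref{eq:RS-euler} into the equation of state~\eqref{eq:EoS}--\eqref{eq:rhoE}, the kinetic contributions cancel and one obtains the compact identity $p_L = \rho_s c_1 (w_1^- - [v_1]_L) - \sigma_{11}^-$. The expression for $[\rho v_1]_L$ in~\eqref{eq:RS-euler} gives $\rho_L [v_1]_L = \rho_L \lambda_2 - \rho^+ \delta$, so that $\rho_L (p_L + \pi) = c_1 \rho_s \rho^+ \delta - \rho_L\bigl(c_1 \rho_s (\lambda_2 - w_1^-) + \sigma_{11}^- - \pi\bigr)$. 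Plugging in the explicit formula for $\rho_L$ and clearing the denominator $D$, which was just shown positive, the desired bound $p_L \geq -\pi$ reduces to a polynomial inequality in the trace data.

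The final step is to check that this polynomial inequality follows from the third hypothesis in~\eqref{eq:couplingbound}, and this is where I expect the main technical obstacle to lie. The reduction is purely algebraic but requires careful regrouping of the $c_1 \rho_s$, $\rho^+ \delta$ and $(p^+ + \pi)$ contributions together with the identity $\delta - (\lambda_2 - w_1^-) = w_1^- - v_1^+$, so that the structural form of the third hypothesis (essentially a bound on $\sigma_{11}^- + p^+ + v_1^+ - w_1^-$ by $p^+ - \pi$ divided by the characteristic factor $\rho_s c_1 + c^+ \rho^+$) becomes visible. Once this inequality is verified, combined with $\rho_L > 0$ it yields $p_L + \pi \geq 0$, completing the proof that $\bU_L$ is physically admissible.
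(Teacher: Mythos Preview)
Your plan is correct and follows essentially the same route as the paper: establish $\rho_L>0$ by inspecting numerator and denominator using the subcharacteristic bound $\lambda_2-v_1^+\ge c^+$ together with the first two inequalities in~\eqref{eq:couplingbound}, then derive the identity $p_L=\rho_s c_1(w_1^--[v_1]_L)-\sigma_{11}^-$ and reduce $p_L+\pi\ge 0$ to a polynomial inequality implied by the third hypothesis.

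The only difference worth noting is in the final step. You pass through $[v_1]_L=\lambda_2-\rho^+\delta/\rho_L$ and therefore substitute the full formula for $\rho_L$, clearing its (relatively complicated) denominator. The paper instead observes that $w_1^--[v_1]_L=\Sigma_1$ and uses the explicit closed form~\eqref{eq:sigma1}, whose denominator is simply $D=\rho^+(\lambda_2-v_1^+)+\rho_s c_1$; it then evaluates $D(p_L+\pi)$ directly. Since the numerator of $\rho_L$ factors as $\rho^+\delta\cdot D$, your expression differs from the paper's only by the positive factor $\rho^+\delta$, so both routes lead to the same polynomial inequality. The paper's shortcut via $\Sigma_1$ spares you a layer of substitution in the algebra you flag as the ``main technical obstacle''.
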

\begin{proof}
  The subcharacteristic condition~\eqref{eq:subcharacteristic} and assumption~\eqref{eq:relspeed} imply that $\lambda_2 \geq |v_1^+| + c^+$. Therefore, the numerator of $\rho_L$ in \eqref{eq:RS-euler} is clearly positive. The same is true for the denominator, since we have 
  \[
    (\lambda_2  -  v_1^+)^2\rho^+ + (\lambda_2  - w_1^-) c_1\rho_s + \sigma_{11}^-  + p^+ \geq [c^+]^2 \rho^+ + (v_1^+ - w_1^- + c^+) c_1 \rho_s + \sigma_{11}^- + p^+
  \]
  as well as $v_1^+ - w_1^- > -c ^+$ and $\sigma_{11}^- + p^+ > -[c^+]^2 \rho^+$ thanks to~\eqref{eq:couplingbound}. This shows that $\rho_L>0$.

  Next, we prove the lower bound of the pressure $p_L$.  Comparing $[\rho \, E]_L$ in \eqref{eq:RS-euler} to \eqref{eq:rhoE} we obtain $e_L$ and thus using~\eqref{eq:EoS} and $[v_1]_L=[w_1]_R$ the pressure reads
  \begin{equation}\label{eq:pL}
    p_L = (\gamma - 1) \rho_L e_L - \gamma \pi = (w_1^- - [w_1]_R) \rho_s c_1  -\sigma_{11}^-. 
  \end{equation}
  Employing our solution of system~\eqref{eq:fsi-nonlinearsystem} we note that
  \begin{equation}\label{eq:sigma1}
     w_1^- - [w_1]_R  = \Sigma_1 =  \frac{  \rho^+ (w_1^- - v_1^+) ( \lambda_2  -  v_1^+)   + \sigma_{11}^- + p^+}{ \rho^+ (\lambda_2  -  v_1^+) + \rho_s c_1} 
   \end{equation}
   Since $D \coloneqq \rho^+ (\lambda_2  -  v_1^+) + \rho_s c_1>0$ the bound $p_L\geq - \pi$ follows from
   \[
     D(p_L + \pi) =  (p^+- \pi) (\rho_s c_1 + (\lambda_2- v_1^+) \rho^+) - (\lambda_2-v_1^+) c^+ \left( \sigma_{11}^- + p^+ + v_1^+ - w_1^- \right) \geq 0,
     \]
     where we have used the subcharacteristic condition and the third inequality in assumption~\eqref{eq:couplingbound}.
   \end{proof}

\begin{remark}{(Properties of the RS and the coupling states)}\label{rem:prop-coupling-states}
  \begin{enumerate}
  \item The technical conditions in~\eqref{eq:couplingbound} assumed in Proposition~\ref{prop:physical-states} follow if $p^+>-\pi$ and $\| \Psi^\bn_U(\bU^-, \bU^+)\|$ is sufficiently small, i.e. the trace data satisfies coupling condition~\eqref{eq:coupling-q-fsi} up to a small error. The latter is expected during numerical computations if we assume that the coupling condition \eqref{eq:coupled-system-multid-coupling-u} is satisfied with respect to the initial data and the numerical scheme is consistent. In fact, it is expected that in this case at a given interface point $\| \Psi^\bn_U(\bU^-, \bU^+)\|$ tends to zero as the mesh is refined, see e.g., the numerical experiments in~\cite{herty2023centrschemtwo}. Thus, the bounds in~\eqref{eq:couplingbound} are usually satisfied in practice.
  \item The coupling state of the fluid model with respect to the tangential velocities satisfies $[v_2]_L=v_2^+$, similarly as the tangential velocity state of the structure model in~\eqref{eq:RS-structure}.
  \item By construction our RS is idempotent. 
  \end{enumerate}
\end{remark}

\subsection{The numerical experiment}\label{sec:FSI-results}
In this Section we verify our approach in the two-dimensional experiment from~\cite{herty2018fluidstruccoupl}. 
We choose plastics as the linear elastic material with density $\rho = 1226$ kg ∕ $m^3$ and
Lamé constants $\mu = 1.4093 \times 10^9$ and $\lambda = 1.4093 \times 10^9$ N ∕ $m^2$. The fluid model accounts for air, an ideal gas, modeled by the constitutive law \eqref{eq:EoS} with parameters $\gamma = 1.4$ and $\pi = 0$ Pa.

The setup considers a high temperature gas bubble near a solid wall approximating a cavitation problem~\cite{dickopp2013coupl}. To this end we consider the solid domain $\Omega_1 = (-0.22, 0) \times (-0.22, 0.22)$~$m^2$ and the fluid domain $\Omega_2 = (0, 0.11) \times (-0.22, 0.22)$~$m^2$, which are separated by the interface $\Gamma=\{0\} \times (-0.22, 0.22)$~$m^2$. In the initial state zero deformation velocity and fluid velocity are assumed. Outside the bubble with center at $(0.02,0)$ and $15$ mm radius pressure $20 \times 10^6$ N / $m^2$ and temperature $293$ K are assumed, whereas inside the bubble pressure and temperature are $10^6$ N / $m^2$ and $693$ K, respectively. In $\Omega_1$ uniform states are assumed with $\sigma_{12} = \sigma_{22}=0$ and $\sigma_{11}$ taken such that the coupling condition~\eqref{eq:cplstress} holds. Similar to the numerical computation in~\cite{herty2018fluidstruccoupl} we discretize $\Omega_1$ over $4 \times 8$ and $\Omega_2$ over $2 \times 8$ cells at the base level and allow for 7 refinement levels with up to $512 \times 1024$ and $256 \times 1024$ cells within the AMR algorithm.
We compute the numerical solution using the DG-SSP-RK scheme developed in Section~\ref{sec:DG} together with the RS designed in Section~\ref{sec:FSI-RS}, for implementation details we refer to Section~\ref{sec:implementation}. The computation has been conducted in approximately four hours on an M4 MacBook Pro with 24 GB Ram using 8 MPI Threads computing in parallel. At the final time after 13,114 time steps 39,860 cells out of 786,432 have been active on the adaptive mesh.

\begin{figure}[h!] 
  \centering
  \includegraphics{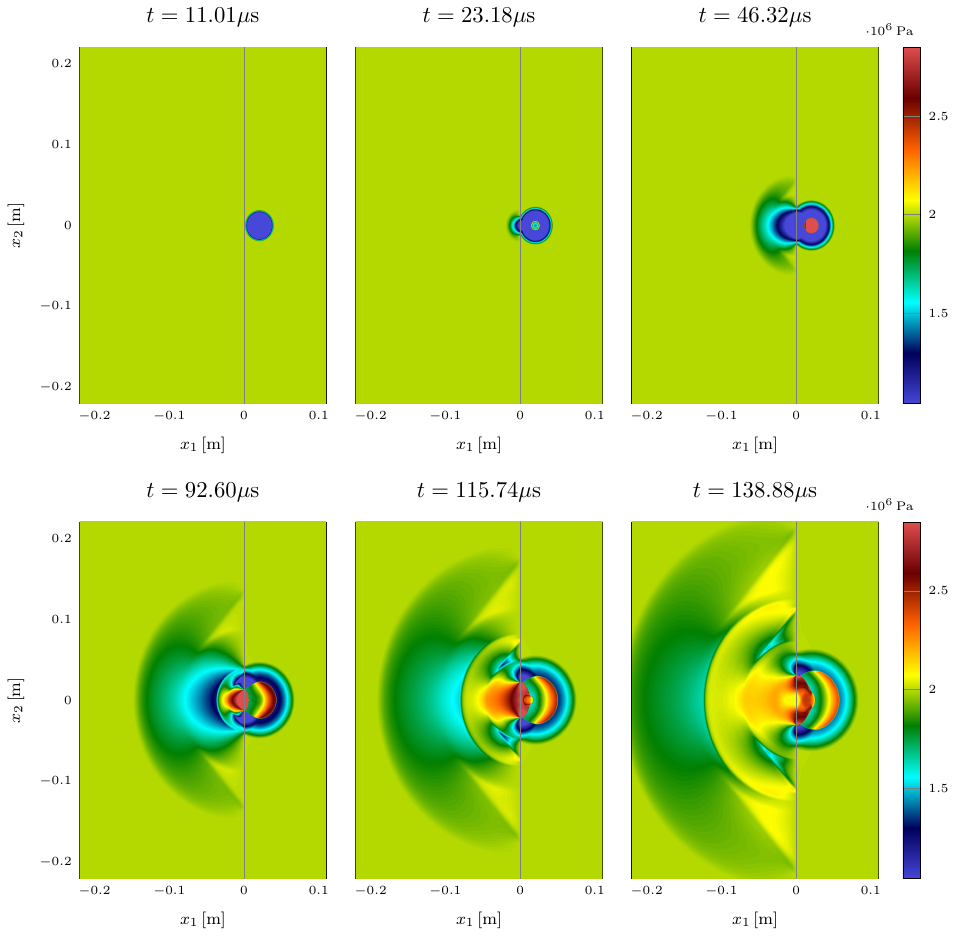}
    \caption{Numerical solution of the fluid-structure coupling problem in terms of negative stress $-\sigma_{11}$ in the solid domain ($x_1<0$\,m) and pressure $p$ in the fluid domain ($x_1>0$\,m) over six time instances.}\label{fig:fluidstructure}
\end{figure}

The numerical results shown in Figure~\ref{fig:fluidstructure} exhibit complex wave dynamics, including an expansion of the bubble, partial transmission and reflection at the interface as well as the formation of a dilatation and a shear wave in the solid material that travel with different velocities and are connected by a so-called von Schmidt wave. For a detailed discussion of the wave dynamics we refer to~\cite{herty2018fluidstruccoupl}, where the same dynamics have been captured. In fact, a comparison between the numerical solution based on our new approach and another numerical solution computed combining the SSP-DG scheme with a RS for the original nonlinear system solving nonlinear coupled half-Riemann problems has revealed identical wave structures. This validates our new relaxation based approach. We note that Figure~\ref{fig:fluidstructure} also shows that the scheme is capable to preserve the coupling condition~\eqref{eq:cplstress} on the discrete level over time, while resolving the solution around the interface in high detail.

\section{Conclusion}\label{sec:conclusion}
In this work we have presented a new numerical framework for conservation laws and hyperbolic systems coupled at
an interface in multiple space dimension. The relaxation approach that we follow avoids the need for information on the Lax curves of the coupled systems, which significantly simplifies its application to various problems as opposed to the established approach, in which two nonlinear coupled half-Riemann problems need to be solved. The multi-dimensional configuration is addressed by locally projecting the relaxed systems in normal direction to the interface. The obtained local relaxed coupling problem, albeit dependent on the direction of the projection, is similar to the one-dimensional problem discussed in~\cite{herty2023centrschemtwo}. Here we have taken the analysis of this problem one step further by providing a general construction algorithm for the RS at the coupling interface for arbitrary systems through the addition of further coupling conditions for the relaxation variable.

The proposed framework consisting of relaxation, projection and problem-specific RS gives rise to a numerical method; here we have followed a modal DG approach allowing for polynomial approximations of arbitrary order within each mesh cell. The construction of the scheme for the coupled relaxation system is straightforward and makes use of Godunov-fluxes for the linear hyperbolic systems and the proposed coupling RS at the interface. Using an implicit discretization of the relaxation source terms allows us to take the scheme to the discrete relaxation limit leading to a scheme for the original coupled problem. We have shown here that this asymptotic-preserving property holds for the class of DG-SSP$k$-IMEX schemes, each resulting in the fully explicit DG-SSP$k$ scheme in the relaxation limit.

Finally, we have applied our approach to the coupling of a linear elastic solid to a fluid and constructed a suitable RS in this way. The explicit form of the coupling states has allowed us to derive conditions under which the solution stays physically admissible. A numerical simulation of an experiment from literature has shown that our framework is capable of capturing complex wave phenomena in high resolution in the same quality as the established Lax curve based approach.  
The presented method will thus constitute a starting point to address further multi-dimensional coupled problems, e.g.,~from multiphase fluids and cavitation, in future work.
\bibliographystyle{abbrvurl}

\appendix

\section{Eigenstructure of the multidimensional relaxation system}\label{appx:diagoonalization}

In this appendix, details on the diagonalization of $\mathcal{A}_i^\bn$ in \eqref{eq:Ain} are discussed. Adopting the definition~\eqref{eq:Vhn} the diagonal eigenvalue matrix takes the form
\[
  \mathcal{D}_i^\bn =
  \begin{pmatrix}
    \bzero &  \\
           & \ddots & \\
           & & \bzero & \\
           & & & - \bSigma_i & \\
           & & & &  \bSigma_i
  \end{pmatrix}.
\]
Given $d-1$ vectors $\mathbf t_1, \dots, \mathbf t_d$ that form an orthonormal basis with $\bn$ as in Section \ref{sec:locrelaxationsystem} the right and left eigenmatrix are given by
\begin{align}
  \mathcal{R}_i^\bn &=
                      \begin{pmatrix}
                        \bzero     & \ldots & \bzero & \bI & \bI \\
                        t_{1,1} \, \bI  & \ldots &  t_{d-1,1} \, \bI & -n_1 \, \bSigma_i^{-1} (\bLambda_i^1)^2 & n_1 \, \bSigma^{-1} (\bLambda_i^1)^2\\
                        \vdots       & & \vdots  & \vdots & \vdots \\
                        t_{1,d} \, \bI  & \ldots &  t_{d-1,d} \, \bI &  -n_d \, \bSigma_i^{-1} (\bLambda_i^d)^2 & n_d \, \bSigma^{-1} (\bLambda_i^d)^2
                      \end{pmatrix},\\[8pt]
  (\mathcal{R}_i^\bn)^{-1} &=
                             \begin{pmatrix}
                               \bzero  &  t_{1,1}\,\bI +n_1 \, \mathbf E_i^1 & \ldots & t_{1,d}\, \bI +n_d \, \mathbf E_i^1 \\[5pt]
                               \vdots   &  \vdots            &        & \vdots \\[5pt]
                               \bzero & t_{d,1} \, \bI+n_1 \, \mathbf E_i^{d-1} & \ldots & t_{d,d} \, \bI + n_d \,\mathbf E_i^{d-1} \\[5pt]
                               \frac{1}{2} \, \bI & -\frac{1}{2} n_1 \, \bSigma^{-1}_i & \ldots & -\frac{1}{2} n_d \, \bSigma^{-1}_i \\[5pt]
                               \frac{1}{2} \, \bI & \frac{1}{2} n_1 \, \bSigma^{-1}_i  & \ldots & \frac{1}{2} n_d \, \bSigma^{-1}_i
                             \end{pmatrix}
\end{align}
using the sub-matrices
\[
  \mathbf E^k_i = - \bSigma^{-1}_i \left(\sum_{j=1}^d t_{k,j} n_j  (\bLambda_i^j)^2 \right) \bSigma^{-1}_i \quad \text{for } k \in \{1,\ldots, d-1\}.
\]
Due to the orthogonality assumption~\eqref{eq:samelambdai} implies $ \mathbf E^k_i = \bzero$ for all $k \in \{1,\ldots, d-1\}$.

\paragraph{Funding} The authors thank the Deutsche Forschungsgemeinschaft (DFG, German Research Foundation) for the financial support under 320021702/GRK2326 (Graduate College Energy, Entropy, and Dissipative Dynamics), through SPP 2410 (Hyperbolic Balance Laws in Fluid Mechanics: Complexity, Scales, Randomness) within the Project 525842915 and through SPP 2311 (Robust Coupling of Continuum-Biomechanical In Silico Models to Establish Active Biological System Models for Later Use in Clinical Applications -- Co-Design of Modelling, Numerics and Usability) within the Project 548864771.

\end{document}